\def\draftdate{January 30, 2020}
\newcommand{\indexing}{\nu\in S}
\newcommand{\supdot}{^{\bullet}}
\newcommand{\HS}[1]{\textrm{Hyp}(#1, \Sp)}
\newcommand{\PS}[1]{\textrm{Pre}(#1, \Sp)}
\newcommand{\Site}{\oT}
\newcommand{\Ab}{\aA b}
\newcommand{\AbSh}[1]{\textrm{Sh}(#1,\Ab)}
\newcommand{\HFC}{\Fib(\kappa)}
\newcommand{\NR}{\aO}
\newcommand{\RS}{\NR_{F}[\tfrac1p]}
\newcommand{\et}{\textup{\'et}}
\newcommand{\LKF}{\aK}
\newcommand{\LKFn}[1][{n}]{\aK^{/p^{#1}}}
\newcommand{\LKM}[1]{\aK_{#1}}
\newcommand{\LKMn}[2]{\aK_{#1}^{/p^{#2}}}
\newcommand{\LK}{\LKM{U_{\et}}}
\newcommand{\LKn}[1][{n}]{\LKMn{U_{\et}}{#1}}
\newcommand{\SHom}{\mathop{\aH\mathrm{om}}}
\let\iso\cong
\let\sma\wedge
\let\smaL\sma
\renewcommand{\to}{\mathchoice{\longrightarrow}{\rightarrow}{\rightarrow}{\rightarrow}}
\newcommand{\sto}{\rightarrow}
\newcommand{\overto}[1]{\xrightarrow{\,#1\,}}
\newcommand{\phat}{^{\scriptscriptstyle\wedge}_{p}}
\newcommand{\nucomp}{^{\wedge}_{\nu}}
\let\catsymbfont\mathcal
\newcommand{\aA}{{\catsymbfont{A}}}
\newcommand{\aF}{{\catsymbfont{F}}}
\newcommand{\aG}{{\catsymbfont{G}}}
\newcommand{\aH}{{\catsymbfont{H}}}
\newcommand{\aK}{{\catsymbfont{K}}}
\newcommand{\aO}{{\catsymbfont{O}}}
\newcommand{\aX}{{\catsymbfont{X}}}
\newcommand{\bG}{{\mathbb{G}}}
\newcommand{\bH}{{\mathbb{H}}}
\newcommand{\bS}{{\mathbb{S}}}
\newcommand{\bZ}{{\mathbb{Z}}}
\newcommand{\bZp}{{\mathbb{Z}_{p}}}
\newcommand{\bZpi}{\bQp/\bZp}
\newcommand{\bQ}{{\mathbb{Q}}}
\newcommand{\bQp}{{\mathbb{Q}_{p}}}
\newcommand{\oT}{\mathcal{T}}
\newcommand{\SL}{\textrm{SL}}
\def\quickop#1{\expandafter\DeclareMathOperator\csname
#1\endcsname{#1}}
\numberwithin{equation}{section}
\newtheorem{thm}[equation]{Theorem}
\newtheorem*{main}{Theorem}
\newtheorem*{maincor}{Corollary}
\theoremstyle{definition}
\theoremstyle{remark}
\newcommand{\term}[1]{\textit{#1}}
\begin{document}

\title[$K$-theoretic Tate-Poitou duality]
{$K$-theoretic Tate-Poitou duality and the fiber of the cyclotomic trace}

\author{Andrew J. Blumberg}
\address{Department of Mathematics, The University of Texas,
Austin, TX \ 78712}
\email{blumberg@math.utexas.edu}
\thanks{The first author was supported in part by NSF grants
DMS-1151577, DMS-1812064}
\author{Michael A. Mandell}
\address{Department of Mathematics, Indiana University,
Bloomington, IN \ 47405}
\thanks{The second author was supported in part by NSF grants
DMS-1505579, DMS-1811820}
\email{mmandell@indiana.edu}

\date{\draftdate} 
\subjclass[2010]{Primary 19D10, 19F05.}
\keywords{Algebraic $K$-theory of
number rings, cyclotomic trace, Tate-Poitou duality, Artin-Verdier
duality.}

\begin{abstract}
Let $p\in \bZ$ be an odd prime.  We prove a spectral version of
Tate-Poitou duality for the algebraic $K$-theory spectra of number
rings with $p$ inverted.  This identifies the
homotopy type of the fiber of the cyclotomic trace $K(\NR_{F})\phat
\to TC(\NR_{F})\phat$ after taking a suitably connective cover.  As an
application, we identify the homotopy 
type at odd primes of the homotopy fiber of the cyclotomic trace for the sphere
spectrum in terms of the algebraic $K$-theory of $\bZ$.
\end{abstract}

\maketitle

\section*{Introduction}

Tate-Poitou duality describes the relationship between the \'etale
cohomology of $S$-integers in number fields  and their
completions in terms of a long exact sequence where the third term is
a Pontryagin dual related to the first term.  In the most basic case,
for $p>2$ a prime in $\bZ$ and a number field $F$, we get a long exact
sequence 
\begin{equation*}\label{eq:tatepoitou}
\begin{gathered}
\xymatrix@R-1pc@C-1.25pc{%
0\ar[r]
&H^{0}_{\et}(\NR_{F}[\tfrac1p];\bZ\phat(k))\ar[r]
&\prod\limits_{\nu\mid p}H^{0}_{\et}(F\nucomp;\bZ\phat(k))\ar[r]
&(H^{2}_{\et}(\NR_{F}[\tfrac1p],\bZ/p^{\infty}(1-k)))^{*}
\ar `r/.75pc[d] `d[l] `[llld] `d/.5pc[lld] [lld]
\\
&H^{1}_{\et}(\NR_{F}[\tfrac1p];\bZ\phat(k))\ar[r]
&\prod\limits_{\nu\mid p}H^{1}_{\et}(F\nucomp;\bZ\phat(k))\ar[r]
&(H^{1}_{\et}(\NR_{F}[\tfrac1p],\bZ/p^{\infty}(1-k)))^{*}
\ar `r/.75pc[d] `d[l] `[llld] `d/.5pc[lld] [lld]
\\
&H^{2}_{\et}(\NR_{F}[\tfrac1p];\bZ\phat(k))\ar[r]
&\prod\limits_{\nu\mid p}H^{2}_{\et}(F\nucomp;\bZ\phat(k))\ar[r]
&(H^{0}_{\et}(\NR_{F}[\tfrac1p],\bZ/p^{\infty}(1-k)))^{*}\ar[r]&0
}
\end{gathered}
\end{equation*}
where $\NR_{F}$ denotes the ring of integers, $F\nucomp$
denotes completion at the valuation $\nu$, and $(-)^{*}$ denotes
Pontryagin dual.  The purpose of this paper is to describe a spectrum-level
``$K$-theoretic'' version of Tate-Poitou duality encoding the behavior
of the completion map in the algebraic $K$-theory of rings of integers
in number fields and use it to study the algebraic $K$-theory of the
sphere spectrum.

Thomason's work \cite{ThomasonEtale} on the Quillen-Lichtenbaum
conjecture identifies the \'etale cohomology groups in the above
sequence as the homotopy groups of
the \term{$K(1)$-localization} of algebraic $K$-theory spectra.
Specifically, Thomason~\cite[Theorem~4.1, App.~A]{ThomasonEtale} shows
\[
\pi_{n}(L_{K(1)}K(R))\iso
\begin{cases}
H^{0}_{\et}(R;\bZp(\tfrac n2))\oplus
H^{2}_{\et}(R;\bZp(\tfrac n2+1))
&n\text{ even}\\[2pt]
H^{1}_{\et}(R;\bZp(\tfrac {n+1}2))
&n\text{ odd}\\
\end{cases}
\]
for $R=\NR_{F}[1/p]$ or $F\nucomp$.  Letting $M_{\bZ/p^{\infty}}$
denote the Moore spectrum for $\bZ/p^{\infty}$, we also have
\[
\pi_{n}(L_{K(1)}K(R)\sma M_{\bZ/p^{\infty}})\iso
\begin{cases}
H^{0}_{\et}(R;\bZ/p^{\infty}(\tfrac n2))\oplus
H^{2}_{\et}(R;\bZ/p^{\infty}(\tfrac n2+1))
&n\text{ even}\\[2pt]
H^{1}_{\et}(R;\bZ/p^{\infty}(\tfrac {n+1}2))
&n\text{ odd}\\
\end{cases}
\]
for $R=\NR_{F}[1/p]$. 
Algebraically, we can then
use these isomorphisms to rewrite the Tate-Poitou sequence as the long
exact sequence
\[
\xymatrix@R-1pc@C-1.25pc{%
&&
&\hspace{-10em}\cdots \to(\pi_{-1-n}(L_{K(1)}K(\NR_{F}[1/p])\sma M_{\bZ/p^{\infty}}))^{*}
\ar `r/.5pc[d] `d[l] `[llld] `d/.5pc[lld] [lld]
\\
&\pi_{n}L_{K(1)}K(\NR_{F}[1/p])\ar[r]
&\prod\pi_{n}L_{K(1)}K(F\nucomp)\to\cdots &
}
\]
with the first term (on the left) the homotopy groups of the $K(1)$-localization of
$K(\NR_{F}[1/p])$ and the second term the homotopy groups of the
product of the $K(1)$-localizations of the $K$-theory of the completed
fields.  We can interpret the third term as the homotopy groups of a
spectrum as well, using Brown-Comenetz duality or Anderson duality:
\begin{align*}
\pi_{-1-n}(L_{K(1)}K(\NR_{F}[1/p])\sma M_{\bZ/p^{\infty}}))^{*}
&\iso \pi_{n}(\Sigma^{-1}I_{\bQ/\bZ}(L_{K(1)}K(\bZ)\sma M_{\bZ/p^{\infty}}))\\
&\iso \pi_{n}(\Sigma^{-1}I_{\bZ\phat} L_{K(1)}K(\bZ)),
\end{align*}
where $I_{\bQ/\bZ}$ denotes the Brown-Comenetz dual and $I_{\bZ\phat}$
denotes the Anderson dual of $p$-complete spectra.  Our main result
lifts this exact sequence to a cofiber sequence on the spectrum
level.

\begin{main}[$K$-Theoretic Tate-Poitou Duality]\label{thm:main}
Let $p>2$ be a prime number. 
Let $F$ be a number field, $\NR_{F}$ its ring of integers, and $S$ the
set of primes of $\NR_{F}$ above $p$.  For $\nu
\in S$, write $F\nucomp$ for the $\nu$-completion of $F$.
The homotopy fiber $\HFC$ of the completion map in $K(1)$-local algebraic $K$-theory 
\[
\kappa \colon 
L_{K(1)}K(\RS) \to 
\prod_{\indexing} 
L_{K(1)}K(F\nucomp)
\]
is weakly equivalent to 
\[
\Sigma^{-1}I_{\bQ/\bZ}(L_{K(1)}K(\RS)\sma M_{\bZpi})
\simeq \Sigma^{-1}I_{\bZp} L_{K(1)}K(\RS).
\]
The weak equivalence $\HFC\to \Sigma^{-1}I_{\bZp}L_{K(1)}K(\RS)$ is adjoint to the map 
\[
L_{K(1)}K(\RS)\sma \HFC\to \Sigma^{-1} I_{\bZp}\bS
\]
induced by the
$L_{K(1)}K(\RS)$-module structure map $L_{K(1)}K(\RS)\sma \HFC\to \HFC$
and a map 
\[
u_{\NR_{F}}\colon \HFC\to \Sigma^{-1} I_{\bZp}\bS
\]
constructed as~\eqref{eq:u}.
\end{main}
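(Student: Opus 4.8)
The plan is to prove a spectrum‑level Artin--Verdier (Poitou--Tate) duality, reducing via Thomason's descent spectral sequence to the classical statement for continuous \'etale cohomology. Write $K_{c}=\HFC$ for the homotopy fiber of $\kappa$. Since $\kappa$ is a map of commutative $\bS$-algebras, $K_{c}$ is canonically a module over $L_{K(1)}K(\RS)$; and since $p$ is odd the archimedean places of $F$ contribute nothing to $K(1)$-local $K$-theory, so $K_{c}$ is the correct spectrum‑level analog of the compactly supported continuous \'etale cohomology $H^{*}_{c}$ of the arithmetic curve $\Spec\RS$. Each of $L_{K(1)}K(\RS)$ and $L_{K(1)}K(F\nucomp)$ carries Thomason's descent spectral sequence \cite[Theorem~4.1]{ThomasonEtale}, converging to its homotopy groups with $E_{2}$-term the continuous \'etale cohomology with $\bZp(\ast)$-coefficients; splicing the long exact sequence relating $H^{*}_{\et}(\RS;-)$, $\prod_{\indexing}H^{*}_{\et}(F\nucomp;-)$ and $H^{*}_{c}$ produces a descent spectral sequence for $K_{c}$ with $E_{2}=H^{*}_{c}$. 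Because the descent spectral sequence for $L_{K(1)}K(\RS)$ is multiplicative, all of these module pairings respect the filtrations and realize cup products on $E_{2}$-pages.

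Next I would assemble the duality map. The canonical map $K_{c}\to\Sigma^{-1}I_{\bZp}\bS$ is constructed in Section~\ref{sec:map}; on the bottom filtration of the descent spectral sequence it is the $p$-adic, Bott‑twisted incarnation of the Artin--Verdier fundamental class (the global trace $H^{\mathrm{top}}_{c}\iso\bQ/\bZ$). Composing the $L_{K(1)}K(\RS)$-module structure map $L_{K(1)}K(\RS)\sma K_{c}\to K_{c}$ with it gives a pairing $L_{K(1)}K(\RS)\sma K_{c}\to\Sigma^{-1}I_{\bZp}\bS$, whose adjoint is
\[
\Phi\colon K_{c}\to F\bigl(L_{K(1)}K(\RS),\Sigma^{-1}I_{\bZp}\bS\bigr)=\Sigma^{-1}I_{\bZp}L_{K(1)}K(\RS).
\]
The identification of this target with $\Sigma^{-1}I_{\bQ/\bZ}(L_{K(1)}K(\RS)\sma M_{\bZpi})$ is the standard comparison of $\bZp$-Anderson and $\bQ/\bZ$-Brown--Comenetz duality for $p$-complete spectra of finite type mod $p$, which $L_{K(1)}K(\RS)$ is.

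It then remains to show $\Phi$ is a weak equivalence. Both source and target are $p$-complete, so it suffices to prove $\Phi$ induces an isomorphism on homotopy groups with $\bZpi$-coefficients (equivalently, mod $p$). The pairing above is filtered for the descent spectral sequences, hence induces a map of spectral sequences; by multiplicativity of Thomason's spectral sequence together with the identification of the fundamental class, this map on $E_{2}$-pages is the Artin--Verdier cup‑product pairing $H^{i}_{c}(\RS;\bZp(k))\otimes H^{j}_{\et}(\RS;\bZpi(1-k))\to\bQ/\bZ$ (with $i+j$ the top cohomological degree and the twists summing to $1$), matched up under the index bookkeeping recorded in the Introduction. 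Classical Poitou--Tate/Artin--Verdier duality --- for $p$ odd, where the real places drop out --- asserts that this pairing is perfect; hence $\Phi$ is an isomorphism on $E_{2}$, therefore on $E_{\infty}$, and therefore, by strong convergence (the $E_{2}$-terms being finite mod $p$ and concentrated in a bounded range of cohomological degrees), on homotopy groups with finite coefficients. The non‑canonical long exact sequence of the Introduction is then obtained by applying $\pi_{*}(-\sma M_{\bZpi})$ to the cofiber sequence $K_{c}\to L_{K(1)}K(\RS)\to\prod_{\indexing}L_{K(1)}K(F\nucomp)$ and inserting the equivalence just proved.

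I expect the main obstacle to be the verification that the spectrum‑level pairing reduces on $E_{2}$-pages \emph{exactly} to the classical duality pairing. This needs a genuinely functorial construction of the fundamental‑class map $K_{c}\to\Sigma^{-1}I_{\bZp}\bS$, naturality of Thomason's descent spectral sequence with respect both to the localization and completion maps of rings and to the multiplicative structure, and a comparison of the resulting cup product and trace with their classical counterparts --- at least up to a $p$-adic unit, which is all that is needed for $\Phi$ to be an equivalence. A secondary technical point is that $K_{c}$ and its Anderson dual are not bounded below, so one must ensure strong convergence of the descent spectral sequences and check that Anderson duality commutes with the relevant homotopy limits; this is controlled by the finiteness and bounded cohomological dimension ($\le 2$ for $\RS$ and $F\nucomp$ when $p$ is odd) of the $E_{2}$-terms.
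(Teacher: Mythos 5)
Your overall strategy is the one the paper uses: exploit the $L_{K(1)}K(\RS)$-module structure on $\HFC$, pair with the fundamental-class map $\HFC\to\Sigma^{-1}I_{\bZp}\bS$, reduce mod $p$, and verify perfectness of the resulting pairing on the $E_2$-page of a multiplicative descent spectral sequence using classical Artin--Verdier/Poitou--Tate duality. You also correctly observe that $p$-completeness reduces the equivalence check to finite coefficients, and you correctly identify the two delicate points (multiplicativity of the descent spectral sequence, and matching the $E_2$-pairing with the arithmetic one).

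However, the step you dispatch in one clause --- ``splicing the long exact sequence relating $H^{*}_{\et}(\RS;-)$, $\prod_\nu H^{*}_{\et}(F\nucomp;-)$ and $H^{*}_{c}$ produces a descent spectral sequence for $K_{c}$ with $E_{2}=H^{*}_{c}$'' --- is not valid as stated and is where most of the paper's technical content lives. Splicing a long exact sequence of cohomology groups does not, by itself, produce a filtered spectrum $K_c$ whose filtration quotients are Eilenberg--Mac\,Lane and on which the $L_{K(1)}K(\RS)$-module action is filtered; without that you cannot form the pairing of spectral sequences nor identify its $E_2$-page. What is actually needed is to realize $K_c$ as the global sections of a fibrant presheaf on $Y_{\et}=(\Spec\NR_{F})_{\et}$ --- that is, as a hypercohomology spectrum of an explicit ``extension-by-zero'' presheaf $j_!\LK$ --- so that the descent filtration and the module structure both arise at the presheaf level. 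This is the content of Theorem~\ref{thm:bang}, which uses a point-set model of $j_!$ and a henselization argument to compare $i^{*}j_{*}\LK$ with the $K$-theory of $\prod\spec F^{h}_{\nu}$. That identification is also what makes Proposition~\ref{prop:identmult} available, matching the $E_2$-pairing with the \emph{specific} Yoneda/Artin--Verdier pairing for $j_!$-coefficients rather than a generic cup product. Likewise, multiplicativity of Thomason's descent spectral sequence is folklore that the paper actually proves (Theorem~\ref{thm:multss}), via a lax monoidal Whitehead-tower functor (Section~\ref{sec:W}) fed into the monoidal Godement resolution; your draft treats it as a routine check. In short: right plan, and you diagnosed the dangerous spots, but the spectral-sequence-for-$K_c$ step needs to be an actual construction (hypercohomology of $j_!$), not a splice.
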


The map $u_{\NR_{F}}$ above is ``canonical'' in that given the standard
conventions for the Hasse invariant
(see~\cite[XIII\S3,p.~193]{Serre-LocalFields}), the construction
involves no further choices.

The previous theorem establishes a global arithmetic duality for algebraic
$K$-theory.  Clausen's MIT thesis~\cite{Clausen-Thesis} contains work
in this direction; in particular, Clausen produces a duality map of a
similar type to the one in the main theorem (with different details),
presumably related through Gross-Hopkins duality.  Work in progress of
Schlank and Stojanoska~\cite{SchlankStojanoska} establishes arithmetic
duality results for a much wider range of theories.

For us, the main interest is in the case $F=\bQ$, 
$S=\{p\}$,
where the main theorem (and some fiddling in low dimensions)
identifies the homotopy fiber of the cyclotomic trace on the sphere
spectrum. 

\begin{maincor}
Let $p$ be an odd prime. The connective cover of the homotopy fiber of the
cyclotomic trace $K(\bS)\phat\to TC(\bS)\phat$ is weakly equivalent to
the connective cover of $\Sigma^{-1}I_{\bZ\phat}(L_{K(1)}K(\bZ))$.
\end{maincor}

Rognes~\cite{Rognes2,Rognesp} had previously identified the homotopy
type of the homotopy fiber of the cyclotomic trace at regular primes,
although not in these terms.

To deduce the corollary from the main theorem, we apply the work of
Dundas~\cite{Dundas-RelK} and Hesselholt-Madsen~\cite{HM2} together with
the (affirmed) Quillen-Lichtenbaum conjecture.  The linearization map
$\bS\to \bZ$ and the cyclotomic trace $K \to TC$ fit together in a
commutative square
\[
\xymatrix{%
K(\bS)\phat\ar[r]\ar[d]&K(\bZ)\phat\ar[d]\\
TC(\bS)\phat\ar[r]&TC(\bZ)\phat
}
\]
that Dundas~\cite{Dundas-RelK} shows is homotopy cartesian, and it
follows that the homotopy fiber of the cyclotomic trace for $\bS$ is
weakly equivalent to the homotopy fiber of the cyclotomic trace for $\bZ$.
Hesselholt-Madsen~\cite{HM2} shows that the completion map $TC(\bZ)\phat\to
TC(\bZ\phat)\phat$ is a weak equivalence and the cyclotomic trace
$K(\bZ\phat)\phat\to TC(\bZ\phat)\phat$ is a connective cover.  It
follows that the connective cover of the homotopy fiber of the
cyclotomic trace is weakly equivalent to the homotopy fiber of the
completion map $K(\bZ)\phat\to K(\bZ\phat)\phat$.  Quillen's
localization sequence for $\bZ\to \bZ[\tfrac1p]$ and $\bZ\phat\to \bQ\phat$,
\[
\xymatrix{%
K(\bZ/p)\ar[r]\ar[d]_{\id}&K(\bZ)\ar[r]\ar[d]&K(\bZ[1/p])\ar[r]\ar[d]&\Sigma\dotsb \\
K(\bZ/p)\ar[r]&K(\bZ\phat)\ar[r]&K(\bQ\phat)\ar[r]&\Sigma\dotsb
}
\]
then shows that the homotopy fiber of the completion map
$K(\bZ)\to K(\bZ\phat)$ is weakly equivalent to the homotopy fiber of
the completion map $K(\bZ[\tfrac1p])\phat\to K(\bQ\phat)\phat$.
The (affirmed) Quillen-Lichtenbaum conjecture~\cite[VI.8.2]{Weibel-KBook}
implies that the maps  
\[
K(\bZ[\tfrac1p])\phat\to L_{K(1)}K(\bZ[\tfrac1p]), \qquad
K(\bQ\phat)\phat\to L_{K(1)}K(\bQ\phat), \qquad
\]
are weak equivalences after taking $1$-connected covers.  Looking in
low dimensions, $K(\bZ[\tfrac1p])\phat\to L_{K(1)}K(\bZ[\tfrac1p])$ is
actually a connective cover,
while $K(\bQ\phat)\phat\to
L_{K(1)}K(\bQ\phat)$ induces an isomorphism on homotopy groups in
degrees $>0$ and an injection in degree $0$.  It then follows that the
(connective) homotopy fiber of $K(\bZ[\tfrac1p])\phat\to
K(\bQ\phat)\phat$ is weakly equivalent to the connective cover of the
homotopy fiber of $L_{K(1)}K(\bZ[\tfrac1p])\to L_{K(1)}K(\bQ\phat)$.
Applying the main theorem, the corollary now follows.

Returning to the case of a number field $F$,
the part of the discussion above that is general still
obtains. Writing $R\nucomp$ for the completion of $\NR_{F}$ at
the prime $\nu$, we have Quillen's localization sequence for $\NR_{F}\to
\RS$ and $\prod R\nucomp\to \prod F\nucomp$,
\[
\xymatrix@R-1pc{%
\prod\limits_{\vtop to 0pt{\hbox{$\scriptstyle\indexing$}
\hrule height0pt width0pt depth0pt
\vss}} 
 K(\NR_{F}/\nu)\ar[r]\ar[d]^-{\iso}&K(\NR_{F})\ar[r]\ar[d]&K(\RS)\ar[r]\ar[d]&\Sigma\dotsb \\
\prod\limits_{\indexing} 
K(R\nucomp/\nu)\ar[r]&
K(
\prod\limits_{\indexing} 
R\nucomp)\ar[r]&
K(
\prod\limits_{\indexing} 
F\nucomp)\ar[r]&\Sigma\dotsb,
}
\]
which identifies the homotopy fiber of the completion map $K(\NR_{F})\to
K(\prod R\nucomp)$ as weakly equivalent to the homotopy fiber
of the completion map $K(\RS)\to K(\prod F\nucomp)$.  
Each $\NR_{F}/\nu$ is a finite field of characteristic $p$, and so
$K(\NR_{F}/\nu)\phat$ is an Eilenberg-Mac Lane spectrum.  The Quillen
localization sequence then implies that the map $L_{K(1)}K(\NR_{F})\to
L_{K(1)}K(\RS)$ is a weak equivalence.  Since
\[
\prod R\nucomp\iso (\NR_{F})^{\wedge}_{p}\iso \NR_{F}\otimes \bZ\phat,
\]
Hesselholt-Madsen~\cite[Add.~6.2]{HM2} shows that the map
$TC(\NR_{F})\phat \to TC(\prod R\nucomp)\phat$ is a weak
equivalence.  Hesselholt-Madsen~\cite[Theorem~D]{HM2} shows that the
map $K(\prod R\nucomp)\phat\to TC(\prod R\nucomp)$ is
a connective cover.  Applying the main theorem above and the (affirmed)
Quillen-Lichtenbaum conjecture, we obtain the following corollary.

\begin{maincor}
Let $F$ be a number and let $p\in \bZ$ be an odd prime.
Then there is a canonical map in the stable
category from the homotopy fiber of the cyclotomic trace
$K(\NR_{F})\phat\to TC(\NR_{F})\phat$ to
$\Sigma^{-1}I_{\bZ\phat}(L_{K(1)}K(\NR_{F}))$ that induces an isomorphism
on homotopy groups in dimensions $\geq 2$.
\end{maincor}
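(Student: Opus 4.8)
The plan is to run, for $\NR_{F}$, the same chain of reductions used before the statement in the case $\NR_{F}=\bZ$, feeding in Theorem~\ref{thm:main} in place of its $\bZ$-version, and to produce the map by the module-theoretic recipe behind the Main Theorem; working over a general number field only costs one homotopical degree, which is the source of the bound $i\geq 2$ rather than $i\geq 0$. Concretely: the cyclotomic trace $K(\NR_{F})\phat\to TC(\NR_{F})\phat$ is a map of commutative $\bS$-algebras, so $\Fib(\tau_{\NR_{F}})$ is a $K(\NR_{F})\phat$-module and $L_{K(1)}\Fib(\tau_{\NR_{F}})$ is an $L_{K(1)}K(\NR_{F})$-module. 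Exactly as for $\bS$ in the Main Theorem (and in Section~\ref{sec:map}), I would construct a canonical map $L_{K(1)}\Fib(\tau_{\NR_{F}})\to\Sigma^{-1}I_{\bZp}\bS$, compose the module pairing with it, and pass to the adjoint to get a canonical map $L_{K(1)}\Fib(\tau_{\NR_{F}})\to\Sigma^{-1}I_{\bZp}L_{K(1)}K(\NR_{F})$; the asserted map is this composed with the $K(1)$-localization map $\Fib(\tau_{\NR_{F}})\to L_{K(1)}\Fib(\tau_{\NR_{F}})$.

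Next I would identify the target. By naturality of the cyclotomic trace along $\NR_{F}\to\prod_{\indexing}R\nucomp$, together with the Hesselholt--Madsen facts that $TC(\NR_{F})\phat\to TC(\prod_{\indexing}R\nucomp)\phat$ is an equivalence and $K(\prod_{\indexing}R\nucomp)\phat\to TC(\prod_{\indexing}R\nucomp)\phat$ is a connective cover, $\tau_{\NR_{F}}$ factors through the connective cover as $K(\NR_{F})\phat\to K(\prod_{\indexing}R\nucomp)\phat\to TC(\NR_{F})\phat$, producing a fiber sequence
\[
\Fib\bigl(K(\NR_{F})\phat\to K({\textstyle\prod_{\indexing}}R\nucomp)\phat\bigr)
\to\Fib(\tau_{\NR_{F}})
\to\Fib\bigl(K({\textstyle\prod_{\indexing}}R\nucomp)\phat\to TC({\textstyle\prod_{\indexing}}R\nucomp)\phat\bigr).
\]
The third term is coconnective, so the first map is an isomorphism on $\pi_{i}$ for $i\geq 0$; moreover, by the known structure of $TC$ of these local rings (B\"okstedt--Madsen, Hesselholt--Madsen) it is a desuspended Eilenberg--Mac Lane spectrum, hence $K(1)$-acyclic, so the first map is an $L_{K(1)}$-equivalence. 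Quillen's localization sequences, using that each $K(\NR_{F}/\nu)\phat$ is Eilenberg--Mac Lane and so $K(1)$-acyclic, identify $\Fib(K(\NR_{F})\phat\to K(\prod_{\indexing}R\nucomp)\phat)\htp\Fib(K(\RS)\phat\to\prod_{\indexing}K(F\nucomp)\phat)$ and $L_{K(1)}K(\NR_{F})\htp L_{K(1)}K(\RS)$. The affirmed Quillen--Lichtenbaum conjecture supplies comparison maps $K(\RS)\phat\to L_{K(1)}K(\RS)$ and $K(F\nucomp)\phat\to L_{K(1)}K(F\nucomp)$ that are isomorphisms on $\pi_{i}$ for $i\geq 1$ and, being $K(1)$-localization maps, are $L_{K(1)}$-equivalences; a five-lemma argument, losing one homotopical degree to the fiber, then shows $\Fib(K(\RS)\phat\to\prod_{\indexing}K(F\nucomp)\phat)\to\HFC$ is an isomorphism on $\pi_{i}$ for $i\geq 2$ and an $L_{K(1)}$-equivalence. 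Chaining these, $L_{K(1)}\Fib(\tau_{\NR_{F}})\htp\HFC$, the localization map $\Fib(\tau_{\NR_{F}})\to L_{K(1)}\Fib(\tau_{\NR_{F}})$ is an isomorphism on $\pi_{i}$ for $i\geq 2$, and Theorem~\ref{thm:main} gives $\HFC\htp\Sigma^{-1}I_{\bZp}L_{K(1)}K(\RS)\htp\Sigma^{-1}I_{\bZp}L_{K(1)}K(\NR_{F})$.

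Granting all this, the asserted map is the composite $\Fib(\tau_{\NR_{F}})\to L_{K(1)}\Fib(\tau_{\NR_{F}})\htp\HFC\htp\Sigma^{-1}I_{\bZp}L_{K(1)}K(\NR_{F})$, hence an isomorphism on $\pi_{i}$ for $i\geq 2$, \emph{provided} the module-theoretic map of the first paragraph agrees, under $L_{K(1)}\Fib(\tau_{\NR_{F}})\htp\HFC$, with the duality map of Theorem~\ref{thm:main}. This compatibility is the one genuinely non-formal point: it reduces to checking that the canonical map $L_{K(1)}\Fib(\tau_{\NR_{F}})\to\Sigma^{-1}I_{\bZp}\bS$ corresponds, under the above identifications, to the canonical map $\HFC\to\Sigma^{-1}I_{\bZp}\bS$ of Theorem~\ref{thm:main}, which is exactly the kind of analysis carried out in Section~\ref{sec:map}. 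Everything else is bookkeeping with long exact sequences and the cited results of Hesselholt--Madsen, Quillen, B\"okstedt--Madsen, and Thomason.
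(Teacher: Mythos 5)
Your proposal follows the same route the paper takes (implicitly; the paper presents the chain of reductions in the paragraph preceding the Corollary and then simply states "we obtain the following corollary"): Quillen localization for $\NR_{F}\to\NR_{F}[\tfrac1p]$ and $\prod R\nucomp\to\prod F\nucomp$, Hesselholt--Madsen to replace $TC(\NR_{F})\phat$ by $TC(\prod R\nucomp)\phat$ and to identify $K(\prod R\nucomp)\phat\to TC(\prod R\nucomp)\phat$ as a connective cover, Quillen--Lichtenbaum to pass to $K(1)$-local $K$-theory, and then the $K$-theoretic Tate--Poitou duality theorem. So the substance matches.

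Two small imprecisions worth fixing. First, you attribute the bound $i\geq 2$ to the Quillen--Lichtenbaum comparisons being isomorphisms "on $\pi_{i}$ for $i\geq 1$" with an additional degree lost to the fiber; but the paper cites Quillen--Lichtenbaum as giving weak equivalences \emph{after taking $1$-connected covers}, i.e., isomorphisms on $\pi_{i}$ for $i\geq 2$, and a five-lemma argument with the long exact sequence then shows the fiber comparison is also an isomorphism on $\pi_{i}$ for $i\geq 2$ with no further loss (for $\pi_{i}\Fib$ you need isomorphisms on $\pi_{i+1}$ and $\pi_{i}$ of both source and target, which hold once $i\geq 2$). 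So the $\geq 2$ comes directly from the Quillen--Lichtenbaum range, not from the fiber. Second, your construction of the map is more elaborate than needed: the paper's canonical map is just the composite of the chain of equivalences (or isomorphisms in the indicated range) from $\Fib(\tau_{\NR_{F}})$ through $\Fib(K(\RS)\phat\to\prod K(F\nucomp)\phat)$, then the Quillen--Lichtenbaum comparison to $\HFC$, then the Tate--Poitou duality equivalence $\HFC\simeq\Sigma^{-1}I_{\bZp}L_{K(1)}K(\RS)\simeq\Sigma^{-1}I_{\bZp}L_{K(1)}K(\NR_{F})$. There is no need to separately re-construct a pairing $L_{K(1)}\Fib(\tau_{\NR_{F}})\smaL L_{K(1)}K(\NR_{F})\to\Sigma^{-1}I_{\bZp}\bS$ and then worry about its compatibility with the map of Theorem~\ref{thm:main}; that "non-formal point" you flag at the end dissolves if you define the map as the composite rather than by duplicating the module-theoretic recipe.
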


Finally, we note that the main theorem implies in the affirmative a
conjecture of Calegari~\cite[1.5]{Calegari} regarding the homotopy
groups of the homotopy fiber on algebraic $K$-theory of the completion
map.  Calegari was interested in the completed cohomology of
$\mathrm{SL}$, and our result yields the construction of a spectrum
with homotopy groups $\tilde{K}_*(\NR_F)$ and continuous spectrum
homology given by $\tilde{H}_*(\SL, \bZp)$~\cite[1.19]{Calegari}.
Moreover, the affirmed conjecture sharpens~\cite[0.2]{Calegari} (an
explicit calculation of completed homology) by making the conclusion
unconditional.

\subsection*{Acknowledgments}
The authors thank Bill Dwyer, Mike Hopkins, Lars Hesselholt, John
Rognes, and Matthias Strauch for helpful conversations, and the
Hausdorff Research Institute for Mathematics for its hospitality while
some of this work was done.  This draft was improved by comments from
Calvin Woo and several anonymous referees.

\section{\texorpdfstring{$K$}K-theoretic local duality and the construction of the map\texorpdfstring{ $\HFC\to \Sigma^{-1}I_{\bZp}\bS$}{}}
\label{sec:map}

The Tate-Poitou duality theorem in global arithmetic derives from a
much easier local duality theorem.  We have a corresponding
$K$-theoretic local duality theorem.  We state and prove the $K$-theoretic local
duality theorem in this section, deducing it from the local duality
theorem in arithmetic.  The argument is parallel to the argument used
in Section~\ref{sec:main} to prove the $K$-theoretic Tate-Poitou
duality theorem and explains the construction of the canonical map
$\HFC\to \Sigma^{-1}I_{\bZp}\bS$, which is characterized by its
relationship to a corresponding map $L_{K(1)}K(\bQp)\to I_{\bZp}\bS$
in the $K$-theoretic local duality of $\bQp$.

In arithmetic, local duality is an isomorphism
\[
H^{i}_{\et}(k;M)\overto{\iso} (H^{2-i}_{\et}(k;M^{*}(1)))^{*}
\]
where $k$ is the field of fractions of a complete discrete valuation
ring whose residue field is finite (e.g., a finite extension of
$\bQp$), $M$ is a finite Galois module, and $(-)^{*}$ denotes the
Pontryagin dual.  The map is induced by the cup product pairing
\[
H^{i}_{\et}(k;M)\otimes H^{2-i}_{\et}(k;M^{*}(1))\to
H^{2}_{\et}(k;M\otimes M^{*}(1))
\to H^{2}_{\et}(k,\bQ/\bZ(1))
\]
and the canonical isomorphism
\begin{equation}\label{eq:Hasse}
H^{2}_{\et}(k,\bQ/\bZ(1))\overto{\iso} H^{2}_{\et}(k,\bG_{m})\overto{\iso} \bQ/\bZ.
\end{equation}
Letting $M=\bZ/p^{n}(j)$ and taking the limit
$n\to \infty$, we get an isomorphism
\[
H^{i}_{\et}(k;\bZp(j))\iso (H^{2-i}_{\et}(k;\bZpi(1-j)))^{*}
\] 
(where the group on the left is Jannsen's continuous \'etale
cohomology, which in the case of a field as above is equivalent to
Galois cohomology). 

For $K$-theory, the $E_{\infty}$ multiplication induces a map
\[
L_{K(1)}K(k) \smaL L_{K(1)}K(k) \smaL M_{\bZpi}\to L_{K(1)}K(k)\smaL M_{\bZpi}
\]
and we have a map 
\begin{equation}\label{eq:canmap}
L_{K(1)}K(k)\smaL M_{\bZpi}\to I_{\bQ/\bZ}\bS
\end{equation}
essentially induced by the Hasse invariant as follows.  Such a map is
uniquely determined by specifying a homomorphism 
\[
\pi_{0}(L_{K(1)}K(k) \smaL M_{\bZpi})\to \bQ/\bZ.
\]
Thomason's descent spectral sequence puts $\pi_{0}(L_{K(1)}K(k) \smaL M_{\bZpi})$
into a short exact sequence
\[
0\to H_{\et}^{2}(k;\bZpi(1))\to \pi_{0}(L_{K(1)}K(k)\smaL M_{\bZpi})\to
H_{\et}^{0}(k;\bZpi)\to 0
\]
which is split by the map $\bZpi=H^{0}_{\et}(k;\bZpi)\to
\pi_{0}(L_{K(1)}K(k) \smaL M_{\bZpi})$ induced by the unit $\bS\to
K(k)$.  This gives us a retraction
\[
\pi_{0}(L_{K(1)}K(k)\smaL M_{\bZpi})\to H_{\et}^{2}(k;\bZpi(1))
\]
that we compose with~\eqref{eq:Hasse} to obtain a map
\begin{equation}\label{eq:pizeromap}
\pi_{0}(L_{K(1)}K(k) \smaL M_{\bZpi})\to \bQ/\bZ.
\end{equation}
The map~\eqref{eq:canmap} represents the map~\eqref{eq:pizeromap}.

\begin{thm}[$K$-Theoretic Local Duality]
\label{thm:localduality}
Let $k$ be the field
of fractions of a complete discrete valuation ring whose residue field
is finite.  The map 
\[
L_{K(1)}K(k) \to I_{\bQ/\bZ}(L_{K(1)}K(k) \smaL M_{\bZpi})\simeq I_{\bZp}(L_{K(1)}K(k))
\]
adjoint to the composite map
\[
L_{K(1)}K(k)\smaL L_{K(1)}K(k)\smaL M_{\bZpi}\to I_{\bQ/\bZ}\bS
\]
described above is a weak equivalence.
\end{thm}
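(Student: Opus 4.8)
The plan is to reduce the theorem to a statement about homotopy groups, where it becomes classical local Tate duality in Galois cohomology.

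First I would record Thomason's descent spectral sequence for the two spectra at hand, $L_{K(1)}K(k)$ and $L_{K(1)}K(k)\smaL M_{\bZpi}$. Since $k$ is the fraction field of a complete discrete valuation ring with finite residue field, its absolute Galois group has $p$-cohomological dimension $2$, so both spectral sequences have nonzero columns only for $s\in\{0,1,2\}$ and all differentials vanish; they degenerate at $E_{2}$. This yields the homotopy-group formulas quoted in the introduction (with $\bZpi$ coefficients for the second spectrum) and, for each $n$, a finite (length $\leq 3$) descent filtration $F\supdot$ on $\pi_{n}L_{K(1)}K(k)$, resp.\ on $\pi_{n}(L_{K(1)}K(k)\smaL M_{\bZpi})$, with associated graded pieces $H^{s}_{\et}(k;\bZp(\tfrac{n+s}2))$, resp.\ $H^{s}_{\et}(k;\bZpi(\tfrac{n+s}2))$, understood to vanish when $n+s$ is odd.

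Next I would analyze the map on homotopy groups. Writing $X=L_{K(1)}K(k)$ and $Y=X\smaL M_{\bZpi}$, the map of the theorem is adjoint to a pairing $X\smaL Y\to I_{\bQ/\bZ}\bS$, so on homotopy it is the map $\pi_{n}X\to(\pi_{-n}Y)^{*}=\pi_{n}I_{\bQ/\bZ}Y$ adjoint to the composite $\pi_{n}X\otimes\pi_{-n}Y\to\pi_{0}Y\to\bQ/\bZ$, where the first arrow is the module multiplication and the second is the Hasse-invariant map used to define \eqref{eq:canmap}. Now I would invoke the multiplicativity of Thomason's descent spectral sequence: the $E_{\infty}$-structure makes the spectral sequence for $X$ multiplicative and the one for $Y$ a module over it, and on $E_{2}=E_{\infty}$ the induced pairings are the cup products in continuous \'etale cohomology. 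Since the Hasse-invariant map $\pi_{0}Y\to H^{2}_{\et}(k;\bZpi(1))\to H^{2}_{\et}(k;\bG_m)\to\bQ/\bZ$ is, by construction, the retraction onto the top filtration quotient $\mathrm{gr}^{2}\pi_{0}Y=H^{2}_{\et}(k;\bZpi(1))$ followed by the classical Hasse invariant, the pairing $\pi_{n}X\otimes\pi_{-n}Y\to\bQ/\bZ$ carries $F^{s}\otimes F^{3-s}$ into $F^{3}\pi_{0}Y=0$ and induces on $\mathrm{gr}^{s}\pi_{n}X\otimes\mathrm{gr}^{2-s}\pi_{-n}Y$ the cup-product pairing
\[
H^{s}_{\et}(k;\bZp(j))\otimes H^{2-s}_{\et}(k;\bZpi(1-j))\to H^{2}_{\et}(k;\bZpi(1))\overto{\iso}\bQ/\bZ,\qquad j=\tfrac{n+s}2.
\]
Classical local Tate duality is precisely the statement that this pairing is perfect, i.e.\ that its adjoint $H^{s}_{\et}(k;\bZp(j))\to(H^{2-s}_{\et}(k;\bZpi(1-j)))^{*}$ is an isomorphism. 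A standard filtration/five-lemma argument (comparing $F\supdot\pi_{n}X$ with the complementary filtration ``annihilator of $F^{3-s}\pi_{-n}Y$'' on $(\pi_{-n}Y)^{*}$) then shows $\pi_{n}X\to\pi_{n}I_{\bQ/\bZ}Y$ is an isomorphism for every $n$, and hence the map of spectra is a weak equivalence.

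The main obstacle is the third step: verifying that the $K(1)$-localized $K$-theory multiplication really does induce the cup product on the descent spectral sequences and that the Hasse-invariant normalization of \eqref{eq:canmap} is compatible with this multiplicative structure. Locally this can be carried out entirely at the level of $E_{2}$-pages, because the spectral sequences degenerate there and one never has to lift a pairing of spectral sequences to a pairing of filtered spectra — which is why the local theorem ``requires no new tools.'' The analogous compatibility in the global setting (Theorem~\ref{thm:main}), where the descent spectral sequence need not degenerate, is exactly what forces the spectrum-level argument with the functorial monoidal Whitehead tower of Section~\ref{sec:W} and the comparison of pairings carried out in Section~\ref{sec:main}.
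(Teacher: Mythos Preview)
Your argument is essentially the paper's proof. Both reduce to showing a perfect pairing on homotopy groups, invoke the multiplicativity of Thomason's descent spectral sequence to identify the associated-graded pairing with the cup product followed by the Hasse invariant, apply classical local Tate duality on $E_{2}=E_{\infty}$, and conclude via a filtration/five-lemma argument (the paper's Proposition~\ref{prop:agpair}).

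The one genuine difference is that the paper first reduces mod $p$: since source and target are $p$-complete, it suffices to check the map is a weak equivalence after smashing with $M_{p}$, and then one is pairing $L_{K(1)}K(k)/p$ with itself, all homotopy groups are finite, and the duality input is the finite-coefficient form $H^{s}_{\et}(k;\bZ/p(j))\otimes H^{2-s}_{\et}(k;\bZ/p(1-j))\to\bQ/\bZ$. You instead run the argument directly with $\bZp$/$\bZpi$ coefficients. That works too (Pontryagin duality is exact, so the filtration argument goes through with one side profinite and the other torsion), but the mod-$p$ reduction is a tidy simplification worth adopting.

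One correction to your final paragraph: degeneration at $E_{2}$ does \emph{not} let you bypass a spectrum-level multiplicative structure. To know that the homotopy-group pairing respects the descent filtration and induces the cup product on associated graded, you still need a multiplicative descent spectral sequence---precisely the folklore result the paper establishes in Theorem~\ref{thm:multss} via the monoidal Whitehead tower, and the paper's proof of Theorem~\ref{thm:localduality} explicitly cites it. The global spectral sequences also collapse for degree reasons; what makes the global case harder is not hidden differentials but rather identifying $\HFC$ as a hypercohomology spectrum (the $j_{!}$ analysis of Section~\ref{sec:dual}) and matching the module pairing on it with the Yoneda/cup product (Proposition~\ref{prop:identmult}).
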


\begin{proof}
Because $L_{K(1)}K(k)$ and $I_{\bZp}(L_{K(1)}K(k))$ are both
$p$-complete, it suffices to check that the map is a weak equivalence
after taking the derived smash product with the mod $p$ Moore spectrum
$M_{p}$, or equivalently taking the homotopy cofiber of multiplication
by $p$.  Using the canonical weak equivalence 
\[
I_{\bQ/\bZ}(L_{K(1)}K(k)\smaL M_{\bZpi})/p \simeq I_{\bQ/\bZ}(L_{K(1)}K(k)/p),
\]
the induced map 
\[
L_{K(1)}K(k)/p\to I_{\bQ/\bZ}(L_{K(1)}K(k)/p)
\]
is adjoint to the composite map
\[
L_{K(1)}K(k)/p\smaL L_{K(1)}K(k)/p \to L_{K(1)}K(k)/p\to
L_{K(1)}K(k)\smaL M_{\bZpi}\to I_{\bQ/\bZ}\bS
\]
induced by the $E_{\infty}$ pairing on $L_{K(1)}K(k)$ and the usual
pairing $M_{p}\smaL M_{p}\to M_{p}$ for the first map and by the
usual inclusion of $M_{p}$ in $M_{\bZpi}$ for the second map.
Thus, it suffices to check that the pairing above
induces a perfect pairing 
\[
\pi_{q}(L_{K(1)}K(k)/p)\otimes \pi_{-q}(L_{K(1)}K(k)/p)\to \bQ/\bZ.
\]
Thomason's descent spectral
sequence~\cite[4.1]{ThomasonEtale} is multiplicative with the
multiplication on the $E_{2}$-term induced by the cup product in
\'etale cohomology.  We therefore have a
perfect pairing on the $E_{2}=E_{\infty}$-term by local duality in
arithmetic, and it follows that we have a perfect pairing on homotopy
groups.
\end{proof}

The proof of the global $K$-theoretic Tate-Poitou duality theorem in
Section~\ref{sec:main} follows the same general outline as the proof
of the local duality theorem above.  However, instead of using the
multiplication on a single $K(1)$-localized $K$-theory spectrum, we
study the pairing of $L_{K(1)}K(\NR_{F}[1/p])$ with $\HFC$, where
$\kappa$ denotes the map $L_{K(1)}K(\RS) \to \prod
L_{K(1)}K(F\nucomp)$ as in the main theorem on
page~\pageref{thm:main}.
In local duality, the map $\pi_{0}(L_{K(1)}K(k)\smaL M_{\bZpi})\to
\bQ/\bZ$ comes from the Hasse invariant isomorphism
$H^{2}_{\et}(k;\bQ/\bZ(1))\iso \bQ/\bZ$, or in terms of $p$-torsion,
the isomorphism 
$H^{2}_{\et}(k;\bZpi(1))\iso \bZpi$.  For global duality, the map
is related to the Albert-Brauer-Hasse-Noether sequence for
$\NR_{F}[1/p]$: the $p$-torsion version of this sequence takes the
form of an exact sequence
\[
0\to H^{2}_{\et}(\NR_{F}[\tfrac1p];\bZpi(1))\to
\prod_{\nu \in S}H^{2}_{\et}(F\nucomp;\bZpi(1))
\to \bZpi\to 0
\]
where $S$ is the set of primes lying above $p$ and $p>2$.  Looking at
the map of short exact sequences from Thomason's descent spectral sequence
\[
\xymatrix@R-1.25pc{%
0\ar[d]&0\ar[d]\\
H^{2}_{\et}(\NR_{F}[\tfrac1p];\bZpi(1))\ar[d]\ar[r]
&\prod H^{2}_{\et}(F\nucomp;\bZpi(1))\ar[d]\\
\pi_{0}(L_{K(1)}K(\NR_{F}[\tfrac1p])\smaL M_{\bZpi})\ar[d]\ar[r]
&\prod \pi_{0}(L_{K(1)}K(F\nucomp)\smaL M_{\bZpi})\ar[d]\\
H^{0}_{\et}(\NR_{F}[\tfrac1p];\bZpi)\ar[d]\ar[r]
&\prod H^{0}_{\et}(F\nucomp;\bZpi)\ar[d]\\
0&0
}
\]
we get an induced map from $\bZpi$ to the cokernel
\[
C=\coker(\pi_{0}(L_{K(1)}K(\NR_{F}[\tfrac1p])\smaL M_{\bZpi})\sto \pi_{0}(\prod
L_{K(1)}K(F\nucomp)\smaL M_{\bZpi})).
\]
This and the long exact sequence on homotopy groups gives us a canonical
map from $\bZpi$ to $\pi_{-1}(\HFC\smaL M_{\bZpi})$.  

\begin{thm}\label{thm:master}
The canonical map $\bZpi\to \pi_{-1}(\HFC\smaL M_{\bZpi})$ is a split
injection and has a unique
retraction that commutes with the $K$-theory transfer associated to
inclusions of number fields.
\end{thm}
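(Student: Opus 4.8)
The plan is to reduce the statement to an explicit computation for $F=\bQ$ and then transport it to every number field by means of the transfer. For a number field $F$ write $\kappa_{F}$ for the completion map of the theorem and $j_{F}\colon\bZpi\to\pi_{-1}(\Fib(\kappa_{F})\smaL M_{\bZpi})$ for the canonical map. Smashing the defining fiber sequence with $M_{\bZpi}$ keeps it a fiber sequence, and Thomason's descent spectral sequence with $M_{\bZpi}$-coefficients identifies, naturally in $R$, the group $\pi_{-1}(L_{K(1)}K(R)\smaL M_{\bZpi})$ with $H^{1}_{\et}(R;\bZpi)$ and places $\pi_{0}(L_{K(1)}K(R)\smaL M_{\bZpi})$ in the canonical short exact sequence $0\to H^{2}_{\et}(R;\bZpi(1))\to\pi_{0}(L_{K(1)}K(R)\smaL M_{\bZpi})\to H^{0}_{\et}(R;\bZpi)\to 0$ split by the unit $\bS\to K(R)$ exactly as in the discussion preceding Theorem~\ref{thm:localduality} (the same argument applies to $R=\RS$). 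Because the unit is natural and $\kappa_{F}$ is a ring map, on $\pi_{0}$ the map $\kappa_{F}$ is block diagonal for these splittings: on the $H^{2}$-block it is the restriction map occurring in the Albert-Brauer-Hasse-Noether sequence, and on the $H^{0}$-block it is the diagonal $\bZpi=H^{0}_{\et}(\RS;\bZpi)\to\prod_{\indexing}H^{0}_{\et}(F\nucomp;\bZpi)=\prod_{\indexing}\bZpi$.

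First I would record the formal consequences. Applying the snake lemma to the map of short exact sequences displayed before the theorem --- whose $H^{0}$-component is the injective diagonal, and whose $H^{2}$-component is injective with cokernel $\bZpi$ by Albert-Brauer-Hasse-Noether --- shows that $\kappa_{F}$ is injective on $\pi_{0}$ and that $C_{F}$, the cokernel of $\kappa_{F}$ on $\pi_{0}$ (the group $C$ in the discussion preceding the theorem), sits in a canonical short exact sequence $0\to\bZpi\to C_{F}\to\coker(\bZpi\xrightarrow{\Delta}\prod_{\indexing}\bZpi)\to 0$. The long exact sequence of the fiber sequence embeds $C_{F}$ in $\pi_{-1}(\Fib(\kappa_{F})\smaL M_{\bZpi})$, and the composite $\bZpi\hookrightarrow C_{F}\hookrightarrow\pi_{-1}(\Fib(\kappa_{F})\smaL M_{\bZpi})$ is exactly $j_{F}$; hence $j_{F}$ is injective, and it is split because $\bZpi=\bQp/\bZp$ is divisible, hence injective as a $\bZp$-module. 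One also gets a canonical retraction of $\bZpi\hookrightarrow C_{F}$: projecting $\pi_{0}(\prod_{\indexing}L_{K(1)}K(F\nucomp)\smaL M_{\bZpi})$ onto its $H^{2}$-block and applying the sum of the local invariants $\prod_{\indexing}H^{2}_{\et}(F\nucomp;\bZpi(1))\to\bZpi$ gives a homomorphism which kills the image of $\kappa_{F}$ on $\pi_{0}$ --- the $H^{0}$-block lands in the $H^{0}$-block, and the image of $H^{2}_{\et}(\RS;\bZpi(1))$ is the kernel of the sum of invariants --- hence descends to $\tilde\lambda_{F}\colon C_{F}\to\bZpi$, and a short diagram chase shows $\tilde\lambda_{F}$ is the identity on the canonical $\bZpi\subseteq C_{F}$.

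Next I would bring in the transfer and run the reduction. For a finite extension $F\subseteq F'$ the $K$-theory transfers along the finite map $\Spec\NR_{F'}[1/p]\to\Spec\RS$ and the analogous finite maps on the completed fields are compatible with the completion maps, hence induce $\mathrm{tr}_{F'/F}\colon\Fib(\kappa_{F'})\to\Fib(\kappa_{F})$ in the stable category; these compose, and on homotopy (after smashing with $M_{\bZpi}$) the transfer induces corestriction on Thomason's descent spectral sequence, respects its filtration, and --- since the transfer of a unit section is the degree times the unit section, by the projection formula --- respects the unit splitting on the local factors. As corestriction preserves the invariant map of local class field theory and the Albert-Brauer-Hasse-Noether sequences, one obtains $\mathrm{tr}_{F'/F}\circ j_{F'}=j_{F}$. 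Now take $F=\bQ$: here $S$ is the single prime $(p)$, so $\coker(\bZpi\xrightarrow{\Delta}\prod_{\indexing}\bZpi)=0$ and $C_{\bQ}=\bZpi$; moreover $\pi_{-1}(L_{K(1)}K(\bZ[1/p])\smaL M_{\bZpi})=H^{1}_{\et}(\bZ[1/p];\bZpi)\cong\Hom(\Gal(\bQ(\mu_{p^{\infty}})/\bQ),\bZpi)\cong\bZpi$ injects into $H^{1}_{\et}(\bQp;\bZpi)$ because $p$ is totally ramified in $\bQ(\mu_{p^{\infty}})/\bQ$, so the kernel of $\pi_{-1}(L_{K(1)}K(\bZ[1/p])\smaL M_{\bZpi})\to\pi_{-1}(L_{K(1)}K(\bQp)\smaL M_{\bZpi})$ vanishes. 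The long exact sequence then gives $\pi_{-1}(\Fib(\kappa_{\bQ})\smaL M_{\bZpi})=C_{\bQ}=\bZpi$ with $j_{\bQ}$ an isomorphism. Set $r_{\bQ}=j_{\bQ}^{-1}$ and, for an arbitrary $F$, $r_{F}=r_{\bQ}\circ\mathrm{tr}_{F/\bQ}$. Since transfers compose and $\mathrm{tr}_{F/\bQ}\circ j_{F}=j_{\bQ}$, each $r_{F}$ is a retraction of $j_{F}$ and $r_{F}\circ\mathrm{tr}_{F'/F}=r_{F'}$, so the family $(r_{F})$ commutes with the transfer. For uniqueness, if $(r_{F})$ and $(r'_{F})$ are two such families, their difference $d$ satisfies $d_{\bQ}\circ j_{\bQ}=0$; since $j_{\bQ}$ is an isomorphism, $d_{\bQ}=0$, and therefore $d_{F}=d_{\bQ}\circ\mathrm{tr}_{F/\bQ}=0$ for every $F$.

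I expect the main obstacle to be the transfer input used above: that the $K$-theory transfer for a finite extension of number fields induces corestriction on Thomason's descent spectral sequence, respecting the descent filtration, compatibly with the invariant map and the Albert-Brauer-Hasse-Noether sequence, and respecting the unit splitting on the local factors. This is folklore --- of a piece with the multiplicativity of the descent spectral sequence in Theorem~\ref{thm:multss} --- but making it precise requires constructing the transfer at the level of the presheaves of symmetric spectra computing \'etale hypercohomology and identifying its effect on the descent filtration. Everything else comes down to the snake lemma, the injectivity of $\bQp/\bZp$ as a $\bZp$-module, and elementary computations with $\bZ[1/p]$ and $\bQp$.
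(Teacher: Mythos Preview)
Your proposal is correct and follows essentially the same route as the paper: reduce to $F=\bQ$ where $j_{\bQ}$ is an isomorphism, verify the transfer compatibility $\mathrm{tr}_{F'/F}\circ j_{F'}=j_{F}$ via the folklore result that the $K$-theory transfer induces the \'etale corestriction on the descent spectral sequence (the paper's Theorem~\ref{thm:transfer}) together with the class-formation fact $\mathrm{inv}_{F\nucomp}\circ\mathrm{cor}=\mathrm{inv}_{E_{\mu}}$, and then set $r_{F}=j_{\bQ}^{-1}\circ\mathrm{tr}_{F/\bQ}$.

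A couple of minor remarks. Your argument that $j_{\bQ}$ is an isomorphism via \emph{injectivity} of $H^{1}_{\et}(\bZ[1/p];\bZpi)\to H^{1}_{\et}(\bQp;\bZpi)$ (from total ramification of $p$ in $\bQ(\mu_{p^{\infty}})$, so the decomposition group surjects onto $G_{S}^{\ab}$) is the right exact-sequence reason; the paper instead appeals to its Corollary~\ref{cor:split}. Also, the auxiliary retraction $\tilde\lambda_{F}$ you build from the sum of local invariants is not actually used once you define $r_{F}$ via the transfer to $\bQ$, so you could drop that paragraph. Finally, you correctly flag the main technical input (transfer on the descent spectral sequence) as the part needing care; the paper proves exactly this in its final section.
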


\begin{proof}
First we note that the $K$-theory transfer associated to the inclusion
of number fields extends to a well-defined map in the stable category
on $\HFC$:  For $F\subset E$ an inclusion of number fields,
$\NR_{E}[1/p]$ is a finitely generated projective $\NR_{F}[1/p]$-module and we have an associated
$K$-theory transfer map $K(\NR_{E}[1/p])\to K(\NR_{F}[1/p])$ induced
by regarding a finitely generated projective $\NR_{E}[1/p]$-module as
a finitely generated projective $\NR_{F}[1/p]$-module. 
For the $p$-completions
\[
\NR_{E}\otimes \bQp \iso \prod_{\nu\in S_{E}}E\nucomp, \qquad 
\NR_{F}\otimes \bQp \iso \prod_{\nu\in S_{F}}F\nucomp,
\]
we have an associated $K$-theory transfer map and in the standard
models for $K$-theory, the diagram
\[
\xymatrix@-1pc{%
K(\NR_{E}[\tfrac1p])\ar[r]\ar[d]&K(\NR_{E}\otimes \bQp)\ar[d]\\
K(\NR_{F}[\tfrac1p])\ar[r]&K(\NR_{F}\otimes \bQp)
}
\]
commutes up to canonical homotopy since for a finitely generated
projective $\NR_{E}[1/p]$-module $P$, the underlying $(\NR_{F}\otimes
\bQp)$-module of $P\otimes \bQp$ is canonically isomorphic to the
underlying $\NR_{F}[1/p]$-module of $P$ tensored with $\bQp$.  This is
enough structure to specify a canonical map in the stable category on
the homotopy fibers.

Uniqueness is clear because in the case $F=\bQ$, the inclusion of
$\bZpi$ in $\pi_{-1}(\HFC\smaL M_{\bZpi})$ is an isomorphism.  To see
this note that the map from $\bZpi$ to the cokernel $C$ is
an isomorphism  (because the map 
\[
H^{0}(\bZ[1/p];\bZpi)\to
H^{0}(\bQp;\bZpi)
\]
is an isomorphism) and the inclusion of the cokernel
$C$ in $\pi_{-1}(\HFC\smaL M_{\bZpi})$ from the long exact sequence of
homotopy groups is surjective because the map
\[
\pi_{-1}(L_{K(1)}K(\bZ[1/p])\smaL M_{\bZpi})\to
\pi_{-1}(L_{K(1)}K(\bQp)\smaL M_{\bZpi})
\]
is injective
(because the map
$H^{1}(\bZ[1/p];\bZpi)\to H^{1}(\bQp;\bZpi)$ is injective by
abelianized Galois group considerations).

For existence of the splitting, since $\bQ$ is initial among
number fields, we just need to know that for an
inclusion of number fields $F\subset E$, the diagram
\[
\xymatrix@-1pc{%
\bZpi\ar[r]\ar[d]_{\id}&C_{E}\ar[d]\\
\bZpi\ar[r]&C_{F}
}
\]
commutes where $C_{E}$ and $C_{F}$ are the cokernels $C$ associated to
$E$ and $F$ above and the map is induced by the $K$-theory transfer.
Because $H^{2}_{\et}(F\nucomp;\bZpi(1))$ is the $p$-torsion in
$H^{2}_{\et}(F\nucomp;\bG_{m})$, the basic properties of a class
formation (q.v.\ Proposition~1(ii) in~\cite[XI\S2]{Serre-LocalFields})
imply that it is enough to see that the diagram
\[
\xymatrix{%
H^{2}_{\et}(\NR_{E}\otimes \bQp;\bZpi(1))\ar[r]\ar[d]
&\pi_{0}(L_{K(1)}(\NR_{E}\otimes \bQp)\smaL M_{\bZpi})\ar[d]\\
H^{2}_{\et}(\NR_{F}\otimes \bQp;\bZpi(1))\ar[r]
&\pi_{0}(L_{K(1)}(\NR_{F}\otimes \bQp)\smaL M_{\bZpi})
}
\]
commutes where the left vertical map is the transfer in \'etale
cohomology.   
This follows from the well-known result that the
$K$-theory transfer for Galois extensions induces the \'etale
cohomology transfer on the $E_{2}$-page of
Thomason's descent spectral sequence.
\end{proof}

The
composite of the map $\pi_{-1}(\HFC\smaL M_{\bZpi})\to \bZpi$ in the
previous theorem with
the inclusion of $p$-torsion $\bZpi\to \bQ/\bZ$ now specifies a map 
\begin{equation}\label{eq:uadj}
\HFC\smaL M_{\bZpi}\to \Sigma^{-1} I_{\bQ/\bZ}\bS.
\end{equation}
Adjoint to this map is the map
\begin{equation}\label{eq:u}
u_{\NR_{F}}\colon \HFC\to F(M_{\bZpi},\Sigma^{-1}
I_{\bQ/\bZ}\bS)\simeq 
\Sigma^{-1}I_{\bZp}\bS
\end{equation}
in the statement of the $K$-theoretic Tate-Poitou
duality theorem on page~\pageref{thm:main}.  In particular, we have
constructed $u_{\NR_{F}}$ to be
compatible with the corresponding map 
\begin{equation}\label{eq:v}
v_{F\nucomp}\colon L_{K(1)}K(F\nucomp)\to I_{\bZp}\bS
\end{equation}
for local duality adjoint to the map~\eqref{eq:canmap} (for
$k=F\nucomp$).  They are compatible in the sense that $v_{F\nucomp}$
is the composite  
\[
L_{K(1)}K(F\nucomp)\to \Sigma \HFC\overto{\Sigma u_{\NR_{F}}}
\Sigma \Sigma^{-1}I_{\bZp}\bS\iso I_{\bZp}\bS,
\]
where the first map is a component of the map 
\[
\prod_{\nu\in S}L_{K(1)}K(F\nucomp)\to \Sigma \HFC
\]
in the cofiber sequence (associated to the fiber sequence) defining $\HFC$.

\section{\texorpdfstring{$\HFC$ as hypercohomology and $j_{!}$}{Fib(k) as hypercohomology and j!}}\label{sec:dual}

The proof of the $K$-theoretic Tate-Poitou duality
theorem relies on an \'etale hypercohomological interpretation of
$\HFC$.  Arithmetic Tate-Poitou duality arises from a duality pairing
plus a long exact sequence arising from \textit{recollement}.  The
purpose of this section is to give a spectral lifting of this setup.  We
begin with a terse review.

For a fixed number field $F$ and $S$ the set of
primes lying over $p$, let $Y=\spec
\NR_{F}$, let $U$ be the open subscheme $Y\setminus S =\spec (\NR_{F}[1/p])$,
and let $Z$ be the reduced closed 
subscheme $Y\setminus U=\coprod \spec (\NR_{F}/\nu)$.  Writing $i$ for
the inclusion of $Z$ in $Y$ 
and $j$ for the inclusion of $U$ in $Y$, we have various adjoint
functors on sheaves of abelian groups on the \'etale sites:
\begin{equation}\label{eq:reco}
\begin{gathered}
\xymatrix{%
\AbSh{Z_{\et}}
\ar@{<-}[r]<1em>|{i^{*}}
\ar[r]|{i_{*}}
\ar@{<-}[r]<-1em>|{i^{!}}
&\AbSh{Y_{\et}}
\ar@{<-}[r]<1em>|{j_{!}}
\ar[r]|{j^{*}}
\ar@{<-}[r]<-1em>|{j_{*}}
&\AbSh{U_{\et}},
}
\end{gathered}
\end{equation}
where each functor is the left adjoint of the functor below
it.  One consequence of recollement is that for any sheaf or complex
of sheaves $\aF$ on
$Y_{\et}$, the unit of the $i_{*},i^{*}$ adjunction and the counit of
the $j_{!},j^{*}$ adjunction fit into a short exact sequence 
\begin{equation}\label{eq:jses}
0\to j_{!}j^{*}\aF\to \aF\to i_{*}i^{*}\aF\to 0.
\end{equation}
Now we take $\aF$ to be a complex modeling $Rj_{*}(\bZ/p^{n}(t))$, the
total right derived functor of $j_{*}$ applied to the locally constant
sheaf $\bZ/p^{n}(t)$ on $U_{\et}$.  We can
identify the terms in the resulting long exact sequence on
hypercohomology as
\begin{equation}\label{eq:j!}
\cdots \to H^{s}(Y_{\et};j_{!}\bZ/p^{n}(t))\to
H^{s}(U_{\et};\bZ/p^{n}(t))\to
\prod_{\nu\in S} H^{s}_{\et}(F^{h}_{\nu};\bZ/p^{n}(t))\to \cdots 
\end{equation}
(cf.~\cite[II.2.3(a)]{Milne-Duality2007}),
where $F^{h}_{\nu}$ denotes the field of fractions of the
henselization $R^{h}_{\nu}$ of the discrete valuation ring
$(\NR_{F})_{(\nu)}$. $F^{h}_{\nu}$ consists of the elements in the completion
$F\nucomp$ that are algebraic over $F$.  Because the inclusion of
$F^{h}_{\nu}\to F\nucomp$ induces an isomorphism of absolute Galois
groups, it induces an isomorphism
$H^{s}_{\et}(F^{h}_{\nu};\bZ/p^{n}(t))\to
H^{s}_{\et}(F\nucomp;\bZ/p^{n}(t))$.

Tate-Poitou duality is a consequence of the long exact
sequence~\eqref{eq:j!} and the perfect
pairing~\cite[II.3.2--3]{Milne-Duality2007} 
\begin{multline}\label{eq:av}
H^{s}(U_{\et};\bZ/p^{n}(t))
\otimes
H^{3-s}(Y_{\et};j_{!}\bZ/p^{n}(1-t))
\to H^{3}(Y_{\et};j_{!}\bZ/p^{n}(1))\\
\to H^{3}(Y_{\et},j_{!}\bG_{m})\iso \bQ/\bZ.
\end{multline}
Here the
isomorphism $H^{3}(Y_{\et},j_{!}\bG_{m})\iso \bQ/\bZ$ is induced by
the map from 
\[
C'=\coker(H^{2}(U_{\et};\bG_{m})\sto \prod H^{2}_{\et}(F^{h}_{\nu};\bG_{m}))
\]
to $H^{3}(Y_{\et},j_{!}\bG_{m})$ (in the
corresponding long exact sequence for $\aF=Rj_{*}(\bG_{m})$), which is an
isomorphism, together with the canonical isomorphism from $C'$ to 
\[
C=\coker(H^{2}(U_{\et};\bG_{m})\sto \prod H^{2}_{\et}(F\nucomp;\bG_{m}))
\]
and the Albert-Brauer-Hasse-Noether isomorphism from $C$ to
$\bQ/\bZ$.

In light of the above, the first step for $K$-theoretic Tate-Poitou
duality is to identify $\HFC$ in terms of a spectral version of
$j_{!}$ applied to the $K(1)$-local algebraic $K$-theory hypersheaf on
$U_{\et}$.  For a Grothendieck site $\Site$, we write $\HS{\Site}$ for
the $\infty$-category of \term{hypersheaves of spectra} on $\Site$; we
understand this as the full subcategory of the $\infty$-category
$\PS{\Site}$ of presheaves of spectra that satisfy hypercover descent.
This also admits a description in terms of localization:
For a presheaf of spectra $\aF$, let $\tilde\pi_{n}\aF$ denote the
sheafification of the presheaf of abelian groups $\pi_{n}\aF$
(homotopy groups applied objectwise).  Work of
Jardine~\cite{Jardine-SimplicialPresheaves,Jardine-Stable} and
Dugger-Hollander-Isaksen~\cite[1.1]{DHI-Hypercovers} identifies
$\HS{\Site}$ as the localization of $\PS{\Site}$ obtained by formally
inverting the maps that are isomorphisms on $\tilde\pi_{n}$ for all
$n$; cf.~\cite[6.5.3.13]{Lurie-HTT}.  The localization functor
$\PS{\Site}\to \HS{\Site}$ is called \term{hypersheafification}. 

As an example, Thomason~\cite[2.45 or 2.50]{ThomasonEtale} shows that
under the hypotheses that hold there and in particular in our current
setting, $K(1)$-local $K$-theory is a hypersheaf on the small \'etale
site.  We write $\LKF$ for the $K(1)$-localized $K$-theory functor and
$\LK$ for the hypersheaf on $U_{\et}$.  It will also be convenient to
write $\LKn$ for $\LK/p^{n}\simeq \LK\smaL M_{p^{n}}$ and $\LKFn$ for
$\LKF/p^{n}$.   

The recollement above extends to the context of hypersheaves of
spectra; see~\cite[A.8.20,A.8.19]{Lurie-HA}). We have a diagram of
adjoint pairs of functors of hypersheaves of spectra
\[
\xymatrix{%
\HS{Z_{\et}}
\ar@{<-}[r]<1em>|{i^{*}}
\ar[r]|{i_{*}}
\ar@{<-}[r]<-1em>|{i^{!}}
&\HS{Y_{\et}}
\ar@{<-}[r]<1em>|{j_{!}}
\ar[r]|{j^{*}}
\ar@{<-}[r]<-1em>|{j_{*}}
&\HS{U_{\et}}
}
\]
mostly analogous to the diagram of adjoint pairs of functors of
sheaves of abelian groups pictured in~\eqref{eq:reco}, or more
precisely analogous to the derived category extension.
(The functors $i^{!}$ and $j_{*}$ are the analogs of the right
derived functors $Ri^{!}$ and $Rj_{*}$ on the derived categories of
sheaves of abelian groups.)  The analogue of~\eqref{eq:jses} also
holds:
\[
j_{!}j^{*}\aF \simeq \Fib(\aF\sto i_{*}i^{*}\aF)
\]
(see the proof of (b) in \cite[A.8.20]{Lurie-HA}, where the equivalent
adjoint formula is proved).  In particular, taking $\aF$ to be
$j_{*}\LK$, we have
\[
j_{!}\LK \simeq \Fib(j_{*}\LK\sto i_{*}i^{*}j_{*}\LK).
\]
The following theorem relates $j_{!}\LK$ to $\HFC$, the
homotopy fiber of the completion map.  

\begin{thm}\label{thm:bang}
The spectrum $j_{!}\LK(Y_{\et})$ of global sections of $j_{!}\LK$
is $p$-equivalent to $\HFC$.
\end{thm}

\begin{proof}
We construct a commutative diagram 
\[
\xymatrix{%
j_{*}\LK(Y_{\et})\ar[r]\ar[d]_{\simeq}
&i_{*}i^{*}j_{*}\LK(Y_{\et})\ar[d]\\
\LKF(U_{\et})\ar[r]
&\displaystyle\prod_{\nu\in S}\LKF(F^{h}_{\nu})\\
}
\]
with the left vertical map the tautological equivalence and the right
vertical map a $p$-equivalence constructed below.  This will then complete the 
argument since the completion map $\kappa$ factors as the bottom
horizontal map followed by the map
\[
\prod_{\nu\in S}\LKF(F^{h}_{\nu})\to \prod_{\nu\in S}\LKF(F\nucomp)
\]
induced by the inclusions $F^{h}_{\nu}\to F\nucomp$, which is a
$p$-equivalence by Thomason's theorem~\cite[4.1]{ThomasonEtale}
proving the $K(1)$-local Quillen-Lichtenbaum conjecture.

Construction of the righthand map essentially amounts to understanding the
hypersheaf $i_{*}i^{*}j_{*}\LK$.  Write $i^{*}_{\pre}$ for the
inverse image functor $\PS{Y_{\et}}\to \PS{Z_{\et}}$. Then for $\nu\in
S$, let $R^{h}_{\nu}$ denote the henselization of $(\NR_{F})_{(\nu)}$,
and for any finite separable extension $k$ of $\NR_{F}/\nu$, let
$R^{h}_{\nu}(k)$ denote the corresponding \'etale 
$R^{h}_{\nu}$-algebra (under the usual equivalence of
categories~\cite[I.4.4]{Milne-EtaleCohomology}).  Then we have 
\[
i^{*}_{\pre}j_{*}\LKn(\spec k) \simeq \LKFn((\spec
R^{h}_{\nu}(k))\times_{Y}U) \simeq \LKFn(F^{h}_{\nu}(k))
\]
(cf.~\cite[1.44]{ThomasonEtale})
where $F^{h}_{\nu}(k)=R^{h}_{\nu}(k)[1/p]$ is the quotient field. It
follows that $i^{*}_{\pre}j_{*}\LKn$ satisfies hypercover descent and
so computes $i^{*}j_{*}\LKn$. In particular
$i_{*}i^{*}j_{*}\LKn(Y_{\et})\simeq \prod \LKn(F^{h}_{\nu})$. Similarly,
$(i^{*}j_{*}\LK)\phat\simeq (i^{*}_{\pre}j_{*}\LK)\phat$, and 
this induces the $p$-equivalence in the diagram.  Since the map 
\[
\LKF(\NR_{F}[1/p])\simeq j_{*}\LK(Y_{\et})\to i_{*}i^{*}_{\pre}j_{*}\LK(Y_{\et})\to \LKF(F^{h}_{\nu})
\]
is induced by the map $\NR_{F}[1/p]\to F^{h}_{\nu}$, the diagram
commutes. 
\end{proof}

In order to apply the previous theorem, we also need to know how the
pairing $\LKF(\NR_{F}[1/p])\sma \HFC \to \HFC$ relates to the
interpretation of $\HFC$ as $j_{!}\LK(Y_{\et})\phat$.  Recent work of
Clausen-Mathew~\cite[2.17]{ClausenMathew} proves that the hypersheafication functor
on any Grothendieck site is lax symmetric monoidal; it follows that
the $\infty$-categories of hypersheaves of spectra on Grothendieck
sites are symmetric monoidal $\infty$-categories.  The direct image
functor is then a symmetric monoidal functor; in particular, the
natural multiplication on $K$-theory induces pairings of
hypersheaves (natural in $n$)
\begin{gather*}
j_{*}\LK \sma j_{*}\LK \to j_{*}\LK \\
j_{*}\LKn\sma j_{*}\LKn\to j_{*}\LKn
\end{gather*}
compatible with the usual pairings
\begin{gather*}
\LKF(\NR_{F}[1/p])\sma \LKF(\NR_{F}[1/p])\to \LKF(\NR_{F}[1/p])\\
\LKFn(\NR_{F}[1/p])\sma \LKFn(\NR_{F}[1/p])\to \LKFn(\NR_{F}[1/p])
\end{gather*}
when passing to global sections.  The usual (equivalence on stalks)
argument shows that for any hypersheaves of spectra $\aF$ and $\aG$,
\[
j_{*}\aF \sma j_{!}\aG \simeq j_{!}(\aF \sma \aG),
\]
and in our context, this and the pairings above give pairings (natural in $n$)
\begin{equation}\label{eq:bangpair}
\begin{gathered}
j_{*}\LK \sma j_{!}\LK \to j_{!}(\LK \sma \LK) \to j_{!}\LK\\
j_{*}\LKn \sma j_{!}\LKn \to j_{!}(\LKn \sma \LKn) \to j_{!}\LKn.
\end{gathered}
\end{equation}

\begin{thm}\label{thm:compair}
Under the equivalence of Theorem~\ref{thm:bang}, the pairing
$\LKF(\NR_{F}[1/p])\sma \HFC\to \HFC$ is the induced pairing on global
sections from the pairing of hypersheaves of~\eqref{eq:bangpair}.
\end{thm}

\begin{proof}
The equivalence $j_{*}\aF \sma j_{!}\aG \simeq j_{!}(\aF \sma \aG)$ is
the inverse of the map adjoint to the equivalence
\begin{equation}\label{eq:jbangsmadef}
j^{*}(j_{*}\aF \sma j_{!}\aG)\simeq j^{*}j_{*}\aF \sma j^{*}j_{!}\aG
\simeq \aF \sma \aG;
\end{equation}
we need to see that it is the map
\begin{multline}\label{eq:jbangsmause}
j_{*}\aF \sma \Fib(j_{*}\aG\sto i_{*}i^{*}j_{*}\aG)
\simeq
\Fib\bigl((j_{*}\aF \sma j_{*}\aG) \sto
   (j_{*}\aF \sma i_{*}i^{*}j_{*}\aG)\bigr)
\to \\
\Fib\bigl((j_{*}\aF \sma j_{*}\aG) \sto
   (i_{*}i^{*}j_{*}\aF \sma i_{*}i^{*}j_{*}\aG)\bigr)
\simeq
\Fib\bigl(j_{*}(\aF\sma \aG)\sto i_{*}i^{*}j_{*}(\aF\sma\aG)\bigr)
\end{multline}
under the usual identification of $j_{!}$ with the fiber.  Now it is
easy to see that~\eqref{eq:jbangsmause} is inverse to the adjoint
of~\eqref{eq:jbangsmadef} by applying $j^{*}$.
\end{proof}

\section{Proof of the \texorpdfstring{$K$}K-theoretic Tate-Poitou duality theorem}\label{sec:main}

In this section we prove the $K$-theoretic Tate-Poitou duality
theorem.  We deduce the result from the classical Tate-Poitou duality
theorem; more precisely, we use the formulation in terms of
Artin-Verdier duality~\eqref{eq:av}.  We argue in terms of a pairing
of \'etale descent spectral sequences.  

\begin{thm}\label{thm:multss}
The descent spectral sequences
\begin{align*}
E_{2}^{s,t}(U_{\et};\LKn)&=H^{s}(U_{\et};\bZ/p^{n}(t/2))
&&\Longrightarrow\quad \pi_{-s+t}\LKn(U_{\et})&\text{and}\\
E_{2}^{s,t}(Y_{\et};j_{!}\LKn)&=H^{s}(Y_{\et};j_{!}\bZ/p^{n}(t/2))
&&\Longrightarrow\quad \pi_{-s+t}j_{!}\LKn(Y_{\et})
\end{align*}
admit a pairing of the form
\[
E_{r}^{s,t}\LKn(U_{\et})\otimes E_{r}^{s',t'}j_{!}\LKn(Y_{\et})
\to E_{r}^{s+s',t+t'}j_{!}\LKn(Y_{\et})
\]
which converges to the pairing
\[
\pi_{-s+t}\LKn(U_{\et})\otimes \pi_{-s'+t'}j_{!}\LKn(Y_{\et})
\to \pi_{-(s+s')+(t+t')}j_{!}\LKn(Y_{\et})
\]
induced from the weak equivalences 
\[
\LKn(U_{\et})\simeq\LKFn(\spec \NR_{F}[\tfrac1p]),
\qquad\textrm{and}\qquad 
j_{!}\LKn(Y_{\et})\simeq \HFC^{/p^{n}}
\]
and the pairing
$\LKFn(\spec \NR_{F}[\tfrac1p]) \smaL \HFC^{/p^{n}} \to \HFC^{/p^{n}}$.
\end{thm}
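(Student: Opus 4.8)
The plan is to realize the module pairing at the level of presheaves of symmetric spectra on $Y_{\et}$, lift it to the Whitehead towers using Theorem~\ref{thm:W}, pass to hypercohomology, and read off the induced pairing of the associated spectral sequences. Work on $Y_{\et}$ throughout, with $Y=\spec\NR_{F}$, $U=\spec\NR_{F}[1/p]$, $Z=Y\setminus U$. Exactly as in Section~\ref{sec:kpre}, the unit map $j_{*}\LKn\to i_{*}i^{*}j_{*}\LKn$ is a map of $A_{2}$-ring presheaves, so the standard path-object construction makes $j_{!}\LKn=\Fib(j_{*}\LKn\to i_{*}i^{*}j_{*}\LKn)$ a point-set module presheaf over the $A_{2}$-ring presheaf $j_{*}\LKn$. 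Applying the lax symmetric monoidal Godement construction $G$ turns this into an $A_{2}$-ring presheaf $G(j_{*}\LKn)$ and a module $G(j_{!}\LKn)$ over it, both Jardine fibrant (hence satisfying hypercover descent); moreover $G(j_{*}\LKn)(Y_{\et})\simeq\bH(Y_{\et};j_{*}\LKn)\simeq\bH(U_{\et};\LKn)\simeq\LKFn(\spec\NR_{F}[1/p])$ and $G(j_{!}\LKn)(Y_{\et})\simeq\bH(Y_{\et};j_{!}\LKn)\simeq\HFC^{/p^{n}}$ by (the mod $p^{n}$ version of) Theorem~\ref{thm:bang}, the module action recovering the pairing $\LKFn(\spec\NR_{F}[1/p])\smaL\HFC^{/p^{n}}\to\HFC^{/p^{n}}$.

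Next I would apply the Whitehead tower functor $W\supdot$ of Theorem~\ref{thm:W} objectwise. Theorem~\ref{thm:W}(iii), composed with the lax monoidal structure map of $G$ and the module action, produces compatible maps
\[
W^{m}G(j_{*}\LKn)\sma W^{m'}G(j_{!}\LKn)\to W^{m+m'}\bigl(G(j_{*}\LKn)\sma G(j_{!}\LKn)\bigr)\to W^{m+m'}G(j_{!}\LKn),
\]
and, since global sections are strict monoidal, these assemble into a pairing of the $\bZ$-indexed towers of spectra $\{\bH(Y_{\et};W^{m}G(j_{*}\LKn))\}$ and $\{\bH(Y_{\et};W^{m'}G(j_{!}\LKn))\}$ landing in $\{\bH(Y_{\et};W^{m+m'}G(j_{!}\LKn))\}$. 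The associated spectral sequences are the descent spectral sequences in the statement (for the first factor one has to match the tower $\bH(U_{\et};W\supdot\LKn)$ built on $U_{\et}$ with the one appearing here, using that $j_{*}$ preserves hypercover descent; convergence is strong because $U_{\et}$ and $Y_{\et}$ have bounded $p$-cohomological dimension, so each spectral sequence has only finitely many nonzero columns). A pairing of towers of spectra induces a pairing of the associated homotopy spectral sequences converging on $E_{\infty}$ to the pairing on the associated graded of the filtered homotopy groups of the limits, by the standard argument (cf.~Dugger~\cite{Dugger-multii}); by the previous paragraph the abutment is the module pairing, as required.

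It then remains to identify the pairing on $E_{2}$ with the cup product. The layers of the Whitehead tower are Eilenberg--Mac Lane, $W^{m}/W^{m+1}\simeq\Sigma^{m}H\tilde\pi_{m}$, so by naturality the pairing of towers restricts on layers to a map of Eilenberg--Mac Lane presheaves $\Sigma^{m}H\tilde\pi_{m}(j_{*}\LKn)\sma\Sigma^{m'}H\tilde\pi_{m'}(j_{!}\LKn)\to\Sigma^{m+m'}H\tilde\pi_{m+m'}(j_{!}\LKn)$, which on homotopy sheaves is the module-multiplication map of graded sheaves $\tilde\pi_{*}(j_{*}\LKn)\otimes\tilde\pi_{*}(j_{!}\LKn)\to\tilde\pi_{*}(j_{!}\LKn)$ --- Theorem~\ref{thm:W}(iii) is set up precisely so that the pairing of layers comes out to this multiplication. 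One then checks that $\bH(Y_{\et};-)$ carries such a pairing of Eilenberg--Mac Lane presheaves to the cup product in \'etale hypercohomology; this is a property of the lax monoidal Godement resolution and its compatibility with cup products in sheaf cohomology. Combining this with Thomason's computations $\tilde\pi_{t}\LKn\cong\bZ/p^{n}(t/2)$ and $\tilde\pi_{t}(j_{!}\LKn)\cong j_{!}\bZ/p^{n}(t/2)$ (the latter established for $j_{!}$ of presheaves of symmetric spectra in Section~\ref{sec:dual}) identifies the $E_{2}$-pairing with the cup-product pairing $H^{s}(U_{\et};\bZ/p^{n}(t/2))\otimes H^{s'}(Y_{\et};j_{!}\bZ/p^{n}(t'/2))\to H^{s+s'}(Y_{\et};j_{!}\bZ/p^{n}((t+t')/2))$ that underlies Artin--Verdier duality~\eqref{eq:av}.

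The formal construction of the tower pairing is immediate once Theorem~\ref{thm:W} is available, so the main obstacle is the last step: verifying that the pairing of Whitehead-tower layers, transported through the Godement resolution and through global sections, is literally the cup-product pairing in \'etale cohomology. This is the ``folklore'' multiplicativity statement being proved, and it is where essentially all the work lies. A secondary nuisance is the bookkeeping required to present the $\LKn$-descent spectral sequence genuinely on $U_{\et}$ in a form compatible with the $Y_{\et}$-level module pairing, and to reconcile the indexing and sign conventions for the Whitehead-tower descent spectral sequence with those of Thomason's descent spectral sequence.
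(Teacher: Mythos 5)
Your strategy is the right one (Whitehead tower, monoidal Godement resolution, module pairing), but you have the order of operations reversed, and this reversal is not a cosmetic choice --- it breaks the argument.

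You apply $W\supdot$ objectwise \emph{after} the Godement construction, producing $W^{m}G(j_{*}\LKn)$ and $W^{m'}G(j_{!}\LKn)$. The issue is that $W^{m}$ (being a connective cover, built by attaching cells) does not commute with the homotopy limits appearing in the hypercover descent condition, so $W^{m}G\aF$ does \emph{not} satisfy hypercover descent even when $G\aF$ does. Concretely, for a hypercover $U\subdot\to V$ the Bousfield--Kan spectral sequence for $\holim W^{m}G\aF(U\subdot)$ has $E_{1}^{s,t}=\pi_{t}W^{m}G\aF(U_{s})$, which agrees with that for $\holim G\aF(U\subdot)$ in total degree $\geq m$ but contributes in total degrees $<m$ from the cohomological columns $s=t-q>0$ with $t\geq m$; by contrast $\pi_{q}W^{m}G\aF(V)=0$ for $q<m$. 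So the map $W^{m}G\aF(V)\to\holim W^{m}G\aF(U\subdot)$ fails to be a weak equivalence. As a result your two readings of $\bH(Y_{\et};W^{m}G\aF)$ each fail: if you take literal global sections (as your appeal to ``global sections are strict monoidal'' suggests), the tower $\{W^{m}G\aF(Y_{\et})\}$ is just the Whitehead tower of the single spectrum $G\aF(Y_{\et})\simeq\bH(Y_{\et};\aF)$, whose spectral sequence is concentrated in the $s=0$ column and has no differentials --- it is not the descent spectral sequence. If instead you insert the required Jardine fibrant replacement to get the actual hypercohomology, you no longer have a point-set pairing, because (as the paper emphasizes in Section~\ref{sec:gode}) the Jardine fibrant replacement is not known to be lax monoidal, and this gap is exactly what the whole construction is designed to avoid.

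The paper's proof applies $W\supdot$ \emph{first}, to the $K$-theory presheaf $\LKn$ on $U_{\et}$, then applies a monoidal levelwise fibrant replacement $L$ and then the Godement construction $G$. Because $LW^{t}(\LKn)$ is objectwise fibrant, $GLW^{t}(\LKn)$ is Jardine fibrant (Proposition~\ref{prop:GR}) and so satisfies hypercover descent; hence its honest global sections compute the hypercohomology, the layers $K^{t}$ are hypercohomology of Eilenberg--Mac Lane presheaves on $\tilde\pi_{t}\LKn$, and the monoidality of $W$, $L$, $G$ passes straight through to global sections. For the $j_{!}$ spectral sequence the paper likewise builds $\aX^{\bullet}$ from $j_{*}GLW^{\bullet}\LKn$ directly, precisely so that both sides of the pairing arise from the same tower $GLW^{\bullet}(\LKn)$ and the module action descends without further replacement. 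Reorganizing your argument so that $W\supdot$ comes first (and using $L$ and $G$ after it) recovers the paper's proof; as written, the step from the pairing of presheaves to the pairing of descent spectral sequences does not go through.
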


We prove this theorem in the next section.
In order to apply it, we need to related the pairing on the
$E^{2}$-term with the pairing in Artin-Verdier duality.  We also prove
the following theorem in the next section.

\begin{thm}\label{thm:identmult}
Under the canonical isomorphism 
\begin{multline*}
H^{*}(U_{\et};\bZ/p^{n}(t/2))\iso
\Ext^{*}_{U_{\et}}(\bZ/p^{n}(t'/2),\bZ/p^{n}(t/2+t'/2))\\
\iso
\Ext^{*}_{Y_{\et}}(j_{!}\bZ/p^{n}(t'/2),j_{!}\bZ/p^{n}(t/2+t'/2)),
\end{multline*}
the multiplication on the $E_{2}$-term in
Theorem~\ref{thm:multss}
\[
H^{*}(U_{\et};\bZ/p^{n}(t/2))
\otimes 
H^{*}(Y_{\et};j_{!}\bZ/p^{n}(t'/2))
\to 
H^{*}(Y_{\et};j_{!}\bZ/p^{n}(t/2+t'/2))
\]
coincides with the Yoneda pairing
\begin{multline*}
\Ext^{*}_{Y_{\et}}(j_{!}\bZ/p^{n}(t'/2),j_{!}\bZ/p^{n}(t/2+t'/2))
\otimes 
H^{*}(Y_{\et};j_{!}\bZ/p^{n}(t'/2))
\\
\to H^{*}(Y_{\et};j_{!}\bZ/p^{n}(t/2+t'/2))
\end{multline*}
\end{thm}

The previous two theorems give all the ingredients we need to prove
the $K$-theoretic Tate-Poitou duality theorem.

\begin{proof}[Proof of the $K$-theoretic Tate-Poitou duality theorem]
The pairing
\[
\LKF(\spec \NR_{F}[\tfrac1p])\smaL \HFC\to \HFC
\]
and the map 
\[
u_{\NR_{F}}\colon \HFC\to \Sigma^{-1}I_{\bZp}\bS
\]
of~\eqref{eq:u} give a pairing
\[
\LKF(\spec \NR_{F}[\tfrac1p])\smaL \HFC\to \Sigma^{-1}I_{\bZp}\bS,
\]
which induces a map
\begin{equation}\label{eq:dualitymap}
\HFC\to \Sigma^{-1}I_{\bZp}(\LKF(\spec \NR_{F}[\tfrac1p]))\simeq
\Sigma^{-1}I_{\bQ/\bZ}(\LKF(\spec \NR_{F}[\tfrac1p])\smaL M_{\bZpi}).
\end{equation}
We need to see that it is a weak equivalence.  
Since both sides are
$p$-complete, it suffices to check that~\eqref{eq:dualitymap} becomes
a weak equivalence 
after smashing with the mod $p$ Moore spectrum $M_{p}$ on both sides.
Then we are looking at the map
\begin{equation}\label{eq:pdualitymap}
\HFC/p\to \Sigma^{-1}I_{\bQ/\bZ}(\LKF(\spec \NR_{F}[\tfrac1p])\smaL M_{\bZpi})/p
\simeq \Sigma^{-1}I_{\bQ/\bZ}(\LKF(\spec \NR_{F}[\tfrac1p])/p),
\end{equation}
which is adjoint to a map
\begin{equation}\label{eq:pairmodp}
\LKF(\spec \NR_{F}[\tfrac1p])/p\smaL \HFC/p\to \Sigma^{-1}I_{\bQ/\bZ}\bS.
\end{equation}
Naturality and the fact that for an odd prime the map
\[
M_{p}\smaL M_{p}\simeq F(M_{p},M_{\bZpi})\smaL M_{p}\to
M_{\bZpi}
\]
induced by evaluation is the same as the composite of the
multiplication on $M_{p}$ and the inclusion of $M_{p}$ in
$M_{\bZpi}$ imply that the map~\eqref{eq:pairmodp} 
is the composite of the multiplication
\[
\LKF(\spec \NR_{F}[\tfrac1p])/p \smaL \HFC/p\to \HFC/p
\]
and the map 
\[
\HFC/p\to \HFC\smaL M_{\bZpi}\to \Sigma^{-1}I_{\bQ/\bZ}\bS
\]
(where $\HFC\smaL M_{\bZpi}\to \Sigma^{-1}I_{\bQ/\bZ}\bS$ is the
map~\eqref{eq:uadj} adjoint to $u_{\NR_{F}}$).
Because \eqref{eq:av} is a perfect pairing, Theorem~\ref{thm:multss}
and Theorem~\ref{thm:identmult} imply that \eqref{eq:pairmodp}
induces a perfect pairing on homotopy groups
\[
\pi_{q}(\LKF(\spec \NR_{F}[\tfrac1p])/p)\otimes \pi_{-1-q}(\HFC/p)\to \bQ/\bZ.
\]
This implies that \eqref{eq:pdualitymap} is a weak equivalence, and we
conclude that \eqref{eq:dualitymap} is a weak equivalence.  
\end{proof}

\section{Construction and analysis of the spectral sequence}\label{sec:ss}

This section proves Theorems~\ref{thm:multss}
and~\ref{thm:identmult}.  
For the proof of~\ref{thm:multss}, we use a modern take on the descent
spectral sequence of Jardine~\cite[\S6.1]{Jardine-GeneralisedEtale}
based on Postnikov towers rather than the original approach of
Thomason based on the Godement construction; these are well-known to
be isomorphic from $E^{2}$ onwards.  As explained by
Dugger~\cite[\S4]{Dugger-multii}, using Whitehead towers in place of
Postnikov towers leads to the same spectral sequence but with better
multiplicative properties, and we take this approach.

Work of Hedenlund-Krause-Nikolaus~\cite{HKN-mult} expands on Dugger's
observations on pairing of spectral sequences and puts them in modern
language.  Although the category of bigraded spectral sequences of
abelian groups does not form a symmetric monoidal category, it does
form an $\infty$-operad under the usual notion of multilinear pairing.
The main theorem of \cite{HKN-mult} is that the usual construction of
a spectral sequence from a tower of spectra assembles to a map of
$\infty$-operads.  Here the $\infty$-category of towers is the
$\infty$-category of spectral presheaves on the ordered set $\bZ$ (for
the increasing order) and the $\infty$-operad structure is the Day
convolution symmetric monoidal structure for
addition~\cite[4.8.1.13]{Lurie-HA}.  A pairing of towers $A\supdot\sma
B\supdot\to C\supdot$ in this structure amounts to (homotopy coherent)
maps
\[
A^{t}\sma B^{t'}\to C^{t+t'}.
\]
A pairing of towers gives a pairing of spectral sequences in the
classical sense.

The Whitehead tower is the name for the functor obtained by assembling
into a tower the truncations $\tau^{\geq n}$ in a $t$-structure. We
will write this functor as $W\supdot$, i.e., 
$W^{n}:=\tau^{\geq n}$. In the present context, $W\supdot$ is a
functor from the $\infty$-category of spectra (with the standard
$t$-structure) to the $\infty$-category
of towers of spectra.  Because the smash product of spectra adds
connectivities, this can be enhanced to a lax symmetric monoidal
functor of symmetric monoidal $\infty$-categories.  Because 
hypersheafication from presheaves of spectra on $U_{\et}$ to \'etale
hypersheaves of spectra on $U_{\et}$ is a symmetric monoidal
functor~\cite[2.17]{ClausenMathew}, the Whitehead tower functor followed by global
sections gives a symmetric monoidal functor from the $\infty$-category
of \'etale hypersheaves on $Y_{\et}$ to the $\infty$-category
of towers of spectra.

The descent spectral sequences of Theorem~\ref{thm:multss} apply the
above composite map of $\infty$-operads from \'etale hypersheaves of
spectra to spectral sequences.  Concretely, the first spectral
sequence in~\ref{thm:multss} is the homotopy group spectral sequence
of the tower of spectra
\[
\cdots \to (W^{t+1}\LKn)(U_{\et})\to (W^{t}\LKn)(U_{\et})\to \cdots .
\]
As an abbreviation in the work below, we write 
\[
C^{t}\LKn=(W^{t}\LKn,W^{t+1}\LKn)
\]
for the pair $W^{t+1}\LKn\to W^{t}\LKn$.  In particular,
\[
\pi_{*}(C^{t}\LKn(U_{\et}))\iso \pi_{*}\Cof\bigl((W^{t+1}\LKn)(U_{\et})\sto(W^{t}\LKn)(U_{\et})\bigr).
\]
Since
$\tilde\pi_{t}\LKn\iso \bZ/p^{n}(t/2)$, we get a canonical isomorphism 
\[
\pi_{-s+t}(C^{t}\LKn(U_{\et}))\iso H^{s}(U_{\et};\bZ/p^{n}(t/2)).
\]
The spectral sequence then has $E_{2}$-term (with the standard
Whitehead/Postnikov Atiyah-Hirzebruch renumbering) 
\[
E_{2}^{s,t}:=\pi_{-s+t}(C^{t}\LKn(U_{\et})) \iso H^{s}(U_{\et};\bZ/p^{n}(t/2))
\]
and abuts to the colimit
\[
\colim \pi_{-s+t} \bigl((W^{\bullet}\LKn)(U_{\et})\bigr) \iso
\pi_{-s+t}\bigl(\LKn(U_{\et})\bigr). 
\]
Because $\holim (W^{\bullet}\LKn)(U_{\et})\simeq *$, the spectral
sequence converges conditionally~\cite[5.10]{Boardman-SpectralSequences}.
Because $H^{s}(U_{\et};\bZ/p^{n}(t/2))$ is only non-zero in a finite
range, the spectral sequence converges
strongly~\cite[6.1]{Boardman-SpectralSequences} to the abutment
$\pi_{-s+t}\LKn(U_{\et})$. 

For the spectral sequence on $\HFC$, we use the tower
\[
\dotsb \to (j_{!}W^{t+1}\LKn)(Y_{\et})\to (j_{!}W^{t}\LKn)(Y_{\et})\to \dotsb .
\]
We abbreviate
\[
\aX^{t}:=j_{!}W^{t}\LKn \simeq \Fib(j_{*}W^{t}\LKn\sto i_{*}i^{*}j_{*}W^{t}\LKn)
\]
and write
\[
C^{t}\aX=(\aX^{t},\aX^{t+1})
\]
for the pair.  We then have
\[
\pi_{-s+t}C^{t}\aX(Y_{\et})\iso H^{s}(Y_{\et};j_{!}\tilde\pi_{-t}\LKn)
\]
and the tower of spectra $\aX^{\bullet}(Y_{\et})$ gives a spectral 
sequence with 
\[
E_{2}^{s,t}:=\pi_{-s+t}(\aX^{t}(Y_{\et}),\aX^{t+1}(Y_{\et})) 
\iso H^{s}(Y_{\et};j_{!}\bZ/p^{n}(t/2))
\]
that abuts to the colimit
\[
\colim \pi_{-s+t} \aX^{\bullet}(Y_{\et})\iso \pi_{-s+t}\bigl(j_{!}\LKn(Y_{\et})\bigr).
\]
Again because the homotopy limit of $\aX^{\bullet}$ is trivial and
$H^{s}(Y_{\et};j_{!}\bZ/p^{n}(t/2))$ is only non-zero in a finite 
range, the spectral sequence converges strongly, and 
$\pi_{-s+t}\colim \aX^{\bullet}(Y_{\et})$ is isomorphic
to $\pi_{-s+t}\HFC$ by the comparison map.  
The pairing property of
$W^{\bullet}$ induces a pairing
\begin{multline*}
W^{t} \LKn(U_{\et}) \sma \aX^{t'}(Y_{\et})
=j_{*} W^{t} \LKn(Y_{\et}) \sma j_{!}W^{t'}\LKn(Y_{\et})\\
\to j_{!}W^{t+t'} (\LKn(Y_{\et}) \sma \LKn(Y_{\et}))
\to j_{!}W^{t+t'}\LKn(Y_{\et})=
\aX^{t+t'}(Y_{\et}),
\end{multline*}
inducing a pairing of spectral sequences.  Theorem~\ref{thm:compair}
identifies this with our standard model
\[
\LKFn(\spec \NR_{F}[\tfrac1p])\sma \HFC^{/p^{n}}\to \HFC^{/p^{n}}
\]
for the pairing of the $K(1)$-local mod $p^{n}$ algebraic $K$-theory
and the fiber.  

This completes the proof of Theorem~\ref{thm:multss}.
Next we need to identify the multiplication on the $E_{2}$-term, which
takes the form
\[
H^{s}(U_{\et};\bZ/p^{n}(t/2))
\otimes
H^{s'}(Y_{\et};j_{!}\bZ/p^{n}(t'/2))
\to
H^{s+s'}(Y_{\et};j_{!}\bZ/p^{n}(t/2+t'/2)).
\]
In the notation above, the
multiplication is induced by the map of pairs
\[
j_{*}C^{t}\LKn(Y_{\et})\sma C^{t'}\aX(Y_{\et})
\to C^{t+t'}\aX(Y_{\et}).
\]
By construction the homotopy cofiber of the pair 
$j_{*}C^{t}\LKn$ is a model for the \'etale hypersheaf 
$\Sigma^{t}j_{*}H\bZ/p^{n}(t/2)$ on $Y_{\et}$ and the homotopy cofiber of the
pair 
$C^{t'}\aX$ is a model
for the \'etale hypersheaf $\Sigma^{t'}j_{!}H\bZ/p^{n}(t'/2)$ on $Y_{\et}$.
Thus, we can identify the induced map on homotopy groups of global
sections as the composite of the cup
product
\begin{multline*}
H^{*}(U_{\et};\bZ/p^{n}(t'/2))
\otimes 
H^{*}(Y_{\et};j_{!}\bZ/p^{n}(t/2))
\\
\iso
H^{*}(Y_{\et};j_{*}H\bZ/p^{n}(t'/2))
\otimes 
H^{*}(Y_{\et};j_{!}H\bZ/p^{n}(t/2))
\\
\to
H^{*}(Y_{\et};
j_{*}H\bZ/p^{n}(t'/2)
\smaL
j_{!}H\bZ/p^{n}(t/2))
\end{multline*}
and the map of \'etale hypersheaves 
\[
j_{*}H\bZ/p^{n}(t'/2)
\smaL
j_{!}H\bZ/p^{n}(t/2)
\to j_{!}H\bZ/p^{n}(t/2+t'/2)
\simeq Hj_{!}\bZ/p^{n}(t/2+t'/2)
\]
induced by the pairing.  Using the equivalence of \'etale
hypersheaves of spectra
\[
j_{*}H\bZ/p^{n}(t/2)
\sma
j_{!}H\bZ/p^{n}(t'/2)
\simeq
j_{!}(H\bZ/p^{n}(t/2)\sma
H\bZ/p^{n}(t'/2)),
\]
since the target 
\[
j_{!}H\bZ/p^{n}(t/2+t'/2)
\simeq Hj_{!}\bZ/p^{n}(t/2+t'/2)
\]
is an Eilenberg-Mac Lane presheaf, the map is determined by the
factorization through the coconnective cover
\[
j_{!}(H\bZ/p^{n}(t/2)\sma H\bZ/p^{n}(t'/2))(-\infty,0]
\simeq j_{!}(H\bZ/p^{n}(t/2+t'/2)).
\]
By looking at stalks, we see that the self-map of
$j_{!}(H\bZ/p^{n}(t/2+t'/2))$ is the identity.  As a consequence, it
follows that the map
\[
H^{*}(U_{\et};\bZ/p^{n}(t/2))
\otimes 
H^{*}(Y_{\et};j_{!}\bZ/p^{n}(t'/2))
\to 
H^{*}(Y_{\et};j_{!}\bZ/p^{n}(t/2+t'/2))
\]
on the $E_{2}$-term in Theorem~\ref{thm:multss} factors through the
corresponding cup product map in the derived category of sheaves of
abelian groups on $Y_{\et}$,
\begin{multline*}
H^{*}(U_{\et};\bZ/p^{n}(t/2))
\otimes 
H^{*}(Y_{\et};j_{!}\bZ/p^{n}(t'/2))
\\
=
\bH^{*}(Y_{\et};Rj_{*}\bZ/p^{n}(t/2))
\otimes 
H^{*}(Y_{\et};j_{!}\bZ/p^{n}(t'/2))
\\
\to
H^{*}(%
\bH_{\Ab}(Y_{\et};Rj_{*}\bZ/p^{n}(t/2))
\otimes^{L}
\bH_{\Ab}(Y_{\et};j_{!}\bZ/p^{n}(t'/2)))
\\
\to 
\bH^{*}(Y_{\et};Rj_{*}\bZ/p^{n}(t/2)\otimes^{L}j_{!}\bZ/p^{n}(t'/2))
\\
\iso
\bH^{*}(Y_{\et};j_{!}(\bZ/p^{n}(t/2)\otimes^{L}\bZ/p^{n}(t'/2)))\\
\to
H^{*}(Y_{\et},j_{!}(\bZ/p^{n}(t/2)\otimes\bZ/p^{n}(t'/2)))\\
\iso
H^{*}(Y_{\et},j_{!}\bZ/p^{n}(t/2+t'/2)).
\end{multline*}
Here for ease of comparison to algebraic conventions, we have switched
to derived category notation and written $\bH_{\Ab}(Y_{\et};-)$ for
the hypercohomology object of a sheaf of abelian groups (i.e., the
sections of the hypersheaf, viewed as an object of the derived
category of abelian groups) and $\bH^{*}(Y_{\et};-)$ for its
hypercohomology groups $H^{*}(\bH_{\Ab}(Y_{\et};-))$.

This identifies the multiplication on the $E_{2}$
term in terms of the cup product, and Theorem~\ref{thm:identmult} now
follows from the 
basic relationship between the cup product and the Yoneda product in
the derived category of sheaves of abelian groups on $Y_{\et}$,
cf.~\cite[\S5.1]{Milne-EtaleCohomology}:
For sheaves of abelian groups $\aF$ and $\aG$, the following diagram
in the derived category commutes
\[
\xymatrix{%
\bH_{\Ab}(Y_{\et};R\SHom(\aF,\aG))
\otimes^{L}
\bH_{\Ab}(Y_{\et};\aF)
\ar[r]\ar[d]_{\simeq}
&\bH_{\Ab}(Y_{\et};R\SHom(\aF,\aG)\otimes^{L}\aF)\ar[d]\\
R\Hom(\aF,\aG)\otimes^{L}\bH_{\Ab}(Y_{\et};\aF)\ar[r]
&\bH_{\Ab}(Y_{\et};\aG)
}
\]
where the top arrow is the cup product and the bottom arrow and
righthand arrows are the appropriate evaluation maps.
This completes the proof of Theorem~\ref{thm:identmult}.



\bibliographystyle{plain}
\bibliography{bluman}

\begin{thebibliography}{10}

\bibitem{Boardman-SpectralSequences}
J.~Michael Boardman.
\newblock Conditionally convergent spectral sequences.
\newblock In {\em Homotopy invariant algebraic structures ({B}altimore, {MD},
  1998)}, volume 239 of {\em Contemp. Math.}, pages 49--84. Amer. Math. Soc.,
  Providence, RI, 1999.

\bibitem{Calegari}
Frank Calegari.
\newblock The stable homology of congruence subgroups.
\newblock {\em Geom. Topol.}, 19(6):3149--3191, 2015.

\bibitem{Clausen-Thesis}
D.~Clausen.
\newblock Arithmetic duality in algebraic {$K$}-theory.
\newblock {MIT} thesis, 2013. See also
  \texttt{http://dtclausen.tumblr.com/errata}.

\bibitem{ClausenMathew}
Dustin Clausen and Akhil Mathew.
\newblock Hyperdescent and \'etale {$K$}-theory.
\newblock Preprint, arXiv:1905.06611, 2019.

\bibitem{Dugger-multii}
D.~Dugger.
\newblock Multiplicative structures on homotopy spectral sequences, part {II}.
\newblock Preprint, arXiv:math/0305187, 2003.

\bibitem{DHI-Hypercovers}
Daniel Dugger, Sharon Hollander, and Daniel~C. Isaksen.
\newblock Hypercovers and simplicial presheaves.
\newblock {\em Math. Proc. Cambridge Philos. Soc.}, 136(1):9--51, 2004.

\bibitem{Dundas-RelK}
Bj{\o}rn~Ian Dundas.
\newblock Relative {$K$}-theory and topological cyclic homology.
\newblock {\em Acta Math.}, 179(2):223--242, 1997.

\bibitem{HKN-mult}
Alice Hedenlund, Achim Krause, and Thomas Nikolaus.
\newblock Convergence of spectral sequences revisited.
\newblock In progress, 2019.

\bibitem{HM2}
Lars Hesselholt and Ib~Madsen.
\newblock On the {$K$}-theory of finite algebras over {W}itt vectors of perfect
  fields.
\newblock {\em Topology}, 36(1):29--101, 1997.

\bibitem{Jardine-SimplicialPresheaves}
J.~F. Jardine.
\newblock Simplicial presheaves.
\newblock {\em J. Pure Appl. Algebra}, 47(1):35--87, 1987.

\bibitem{Jardine-Stable}
J.~F. Jardine.
\newblock Stable homotopy theory of simplicial presheaves.
\newblock {\em Canad. J. Math.}, 39(3):733--747, 1987.

\bibitem{Jardine-GeneralisedEtale}
J.~F. Jardine.
\newblock {\em Generalized etale cohomology theories}.
\newblock Modern Birkh\"auser Classics. Birkh\"auser/Springer Basel AG, Basel,
  2010.
\newblock Reprint of the 1997 edition [MR1437604].

\bibitem{Lurie-HA}
J.~Lurie.
\newblock Higher algebra.
\newblock Preprint. Available at
  \nolinkurl{http://www.math.harvard.edu/~lurie/HA.pdf}, 2017.

\bibitem{Lurie-HTT}
Jacob Lurie.
\newblock {\em Higher topos theory}, volume 170 of {\em Annals of Mathematics
  Studies}.
\newblock Princeton University Press, Princeton, NJ, 2009.

\bibitem{Milne-Duality2007}
J.~S. Milne.
\newblock {\em Arithmetic duality theorems}.
\newblock BookSurge, LLC, Charleston, SC, second edition, 2006.

\bibitem{Milne-EtaleCohomology}
James~S. Milne.
\newblock {\em \'{E}tale cohomology}, volume~33 of {\em Princeton Mathematical
  Series}.
\newblock Princeton University Press, Princeton, N.J., 1980.

\bibitem{Rognes2}
John Rognes.
\newblock Two-primary algebraic {$K$}-theory of pointed spaces.
\newblock {\em Topology}, 41(5):873--926, 2002.

\bibitem{Rognesp}
John Rognes.
\newblock The smooth {W}hitehead spectrum of a point at odd regular primes.
\newblock {\em Geom. Topol.}, 7:155--184 (electronic), 2003.

\bibitem{SchlankStojanoska}
T.~Schlank and V.~Stojanoska.
\newblock Arithmetic duality for spectra.
\newblock In progress, 2019.

\bibitem{Serre-LocalFields}
J.P. Serre.
\newblock {\em Local fields}, volume~67 of {\em Graduate Texts in Mathematics}.
\newblock Springer-Verlag, New York, 1979.

\bibitem{ThomasonEtale}
R.~W. Thomason.
\newblock Algebraic {$K$}-theory and \'etale cohomology.
\newblock {\em Ann. Sci. \'Ecole Norm. Sup. (4)}, 18(3):437--552, 1985.

\bibitem{Weibel-KBook}
Charles~A. Weibel.
\newblock {\em The {$K$}-book}, volume 145 of {\em Graduate Studies in
  Mathematics}.
\newblock American Mathematical Society, Providence, RI, 2013.
\newblock An introduction to algebraic $K$-theory.

\end{thebibliography}

\end{document}